\pgfplotsset{compat=1.10}
\definecolor{racing}{rgb}{0.7,0.1,0.2}
\definecolor{french}{rgb}{0,0.2,0.7}
\tikzset{
	state/.style={
		rectangle,
		rounded corners,
		draw=black, very thick,
		minimum height=2em,
		inner sep=2pt,
		text centered,
	},
}
\def\ds{\displaystyle}
\def\eps{{\varepsilon}}
\def\N{\mathbb{N}}
\def\R{\mathbb{R}}
\def\HH{\mathcal{H}}
\newcommand{\be}{\begin{equation}}
\newcommand{\ee}{\end{equation}}
\newcommand{\bib}[4]{\bibitem{#1}{\sc#2: }{\it#3. }{#4.}}
\newcommand{\cp}{\mathop{\rm cap}\nolimits}
\numberwithin{equation}{section}
\theoremstyle{plain}
\newtheorem{teo}{Theorem}[section]
\newtheorem{lemma}[teo]{Lemma}
\newtheorem{prop}[teo]{Proposition}
\theoremstyle{remark}
\newtheorem{oss}[teo]{Remark}
\newcommand{\mean}[1]{\,-\hskip-1.08em\int_{#1}} 
\theoremstyle{plain}
\newcommand{\mathbbmm}[1]{\text{\usefont{U}{bbm}{m}{n}#1}}
\newcommand{\ind}{\mathbbmm{1}}
\title{A two\,-\,phase problem with Robin conditions on the free boundary}
\author[S. Guarino Lo Bianco, D. A. La Manna, B. Velichkov]{
	Serena Guarino Lo Bianco, Domenico Angelo La Manna, Bozhidar Velichkov}
\address{
	Serena Guarino Lo Bianco\\
	Universit\`a degli studi di Napoli ``Federico II''\\
	Dipartimento di Agraria\\
	Via Universit\`a 100,
	80055 Portici (NA), Italia.
}
\email{ serena.guarinolobianco@unina.it}
\address{
Domenico Angelo La Manna\\
 University of Jyvaskyl\"a, Department of Mathematics and Statistics,  P.O. Box 35 (MaD)
FI-40014, Finland}
\email{domenicolamanna@hotmail.it}
\address {Bozhidar Velichkov: \newline \indent
	Dipartimento di Matematica, Universit\`a di Pisa \newline \indent
	Largo Bruno Pontecorvo, 5, 56127 Pisa - ITALY}
\email{bozhidar.velichkov@unipi.it}
\begin{document}

\begin{abstract}
We study for the first time a two-phase free boundary problem in which the solution satisfies a Robin boundary condition. We consider the case in which the solution is continuous across the free boundary and we prove an existence and a regularity result for minimizers of the associated variational problem. Finally, in the appendix, we give an example of a class of Steiner symmetric minimizers.
\end{abstract}

\keywords{Free boundary problems, two-phase, Robin boundary conditions, regularity}

\maketitle


\section{Introduction}\label{s:intro}
For a fixed a constant $\beta>0$ and a smooth bounded open set $D\subset \R^d$, $d\ge 2$, we consider the functional 
$$J_\beta(u,\Omega)=\int_D|\nabla u|^2\,dx+\beta\int_{\partial^\ast \Omega}u^2\,d\HH^{d-1},$$
defined on the couples $(u,\Omega)$, where $u\in H^1(D)$, $\Omega\subset\R^d$ is a set of finite perimeter in the sense of De Giorgi (see Section \ref{s:prelimiaries})
 and $\partial^\ast\Omega$ denotes the reduced boundary of $\Omega$ (see Section \ref{s:prelimiaries}). Recall that, when $\Omega$ is smooth, $\partial^\ast\Omega$ is the topological boundary of $\Omega$.
\medskip

In this paper we study the existence and the regularity of minimizers of the functional $J_\beta$ 
among all couples $(u,\Omega)$, which are fixed outside the domain $D$. Precisely, throughout the paper, we fix a set $E\subset\R^d$ of finite perimeter, a constants $m>0$ and a function 
$$v\in H^1_{loc}(\R^d)\quad\text{such that}\quad   v\ge m\quad\text{in}\quad\R^d\quad\text{and}\quad\int_{\partial^\ast E}v^2\,d\HH^{d-1}<+\infty\ ;$$ 
we define the admissible sets
$$\mathcal V=\big\{u\in H^1_{loc}(\R^d)\ :\ u-v\in H^1_0(D)\big\},$$
$$\mathcal E=\big\{\Omega\subset\R^d\ :\ \text{\rm Per}(\Omega)<+\infty\ \text{ and }\ \Omega=E\ \text{ in }\ \R^d\setminus D\big\},$$
and we consider the variational minimization problem 
\begin{equation}\label{e:pb}
\min\big\{J_\beta(u,\Omega)\ :\ u\in\mathcal V,\ \Omega\in\mathcal E\big\}.
\end{equation}
Our main result is the following.
%
%
%
\begin{teo}[Existence and regularity of minimizers]\label{t:main}
Let $\beta>0$, $D\subset\R^d$, $v$, $E$, $\mathcal V$ and $\mathcal E$ be as above. Then the following holds.

\begin{enumerate}[\rm (i)]
\item There exists a solution $(u,\Omega)\in\mathcal V\times\mathcal E$ to the variational problem \eqref{e:pb}.
\item For every solution $(u,\Omega)$ of \eqref{e:pb}, $u$ is H\"older continuous and bounded from below by a strictly positive constant in $D$. 

\item If $(u,\Omega)$ is a solution to \eqref{e:pb}, then the free boundary $\partial\Omega\cap D$ can be decomposed as the disjoint union of a regular part $\text{\rm Reg}(\partial\Omega)$ and a singular part $\text{\rm Sing}(\partial\Omega)$, where :
\begin{enumerate}[\quad $\bullet$]
	\item $\text{\rm Reg}(\partial\Omega)$  is a $C^{\infty}$ hypersurface and a relatively open subset of $\partial\Omega$, and the function $u$ is $C^\infty$ smooth on $\text{\rm Reg}(\partial\Omega)$;
	\item $\text{\rm Sing}(\partial\Omega)$ is a closed set, which is empty if $d\le 7$, discrete if $d=8$, and of Hausdorff dimension $d-8$, if $d>8$.
\end{enumerate}		
%
%

\end{enumerate}
\end{teo}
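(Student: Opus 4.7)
\emph{Overall strategy.} I plan to address (i), (ii), (iii) in order, noting that (ii) underpins both the perimeter compactness required in (i) and the almost-minimizer framework needed in (iii). For the existence assertion in (i), the direct method gives that a minimizing sequence $(u_n,\Omega_n)$ satisfies $\|\nabla u_n\|_{L^2(D)}\le C$, hence $u_n\rightharpoonup u$ in $H^1(D)$ (Poincar\'e, since $u_n-v\in H^1_0(D)$). The main obstacle is extracting a limit for $\Omega_n$: the functional contains no autonomous perimeter term, so $\mathrm{Per}(\Omega_n,D)$ must be controlled through the bound $\beta\int_{\partial^\ast\Omega_n}u_n^2\le C$ combined with a positive lower bound $u_n\ge c>0$. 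To break the circularity --- such a lower bound is itself part of (ii) --- I would add a penalization $\varepsilon\,\mathrm{Per}(\Omega,D)$ to the functional, apply the direct method to the regularized problem (where perimeter compactness is automatic), prove that its minimizers satisfy the same lower bound via the argument sketched for (ii) below, and pass to the limit $\varepsilon\to 0$ using the resulting uniform perimeter bound. Lower semicontinuity of the weighted-perimeter term (for $\chi_{\Omega_n}\to\chi_\Omega$ in $L^1$ and $u_n\to u$ in $L^2$) then concludes.

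For (ii), a minimizer satisfies the weak Euler--Lagrange equation
\[
\int_D\nabla u\cdot\nabla\eta\,dx+\beta\int_{\partial^\ast\Omega}u\,\eta\,d\HH^{d-1}=0\qquad\forall\,\eta\in H^1_0(D),
\]
so $u$ is harmonic in $D\setminus\partial^\ast\Omega$ with Robin-type jump $[\partial_\nu u]=-\beta u$ across $\partial^\ast\Omega$; in particular $u$ is subharmonic, yielding $u\le\|v\|_{L^\infty}$ from above. I would then establish H\"older continuity via a De Giorgi iteration on the truncations $(u-k)_\pm$, controlling $\int_{\partial^\ast\Omega\cap\{u>k\}}u$ by the isoperimetric inequality and the perimeter bound from (i), and concluding by a Campanato-type decay of oscillation. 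The positive lower bound $u\ge c>0$ would then follow from the Harnack inequality applied in each connected component of $D\setminus\partial^\ast\Omega$, propagating $u\ge m$ from $\partial D$ and exploiting continuity of $u$ across the free boundary.

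For (iii), fixing the minimizer $u$ from (ii), the set $\Omega$ minimizes $\Omega'\mapsto\int_{\partial^\ast\Omega'}u^2\,d\HH^{d-1}$ in $\mathcal E$. Since $u^2\in C^{0,\alpha}$ with $c^2\le u^2\le C^2$, comparing $\Omega$ with local perturbations $\Omega'$ supported in a ball $B_r(x_0)\subset D$ gives
\[
\mathrm{Per}(\Omega,B_r)\le(1+Cr^\alpha)\,\mathrm{Per}(\Omega',B_r),
\]
so $\Omega$ is a Tamanini almost-minimizer of the perimeter. By the De Giorgi--Tamanini regularity theorem, $\mathrm{Reg}(\partial\Omega)\in C^{1,\alpha/2}$, and the Hausdorff-dimension bounds on $\mathrm{Sing}(\partial\Omega)$ follow from Federer's dimension reduction together with the classification of minimizing cones. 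Finally, $C^\infty$ smoothness of $\mathrm{Reg}(\partial\Omega)$ (and of $u$ thereon) is obtained by a bootstrap: $C^{1,\beta}$ regularity of $\partial\Omega$ yields $C^{1,\beta}$ regularity of $u$ up to the free boundary via Schauder estimates for the transmission problem with Robin interface, which then upgrades the first-variation equation on $\partial\Omega$ to higher order; iterating reaches $C^\infty$.
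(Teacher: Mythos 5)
Your part (iii) is essentially the paper's argument (almost-minimality of the perimeter from the H\"older continuity and the two-sided bounds on $u$, Tamanini's theorem, then a bootstrap between the transmission problem and the first-variation equation for $\partial\Omega$), and your choice of regularization for (i) -- penalizing by $\eps\,\mathrm{Per}(\Omega,D)$ rather than imposing the constraint $u\ge \sfrac1n$ as the paper does -- is a legitimate variant, provided the semicontinuity of $\Omega\mapsto\int_{\partial^\ast\Omega}u^2\,d\HH^{d-1}$ along sequences $(u_n,\Omega_n)$ is actually proved (this is not the standard BV lower semicontinuity, since the weight $u_n^2$ varies with $n$ and is only an $H^1$ trace on $\partial^\ast\Omega_n$; the paper devotes Lemma \ref{l:semicontinuity} to it).

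The genuine gap is in your proof of the uniform positive lower bound $u\ge c>0$, which is also the step your existence argument leans on to pass to the limit $\eps\to0$. You propose to obtain it from the Harnack inequality ``in each connected component of $D\setminus\partial^\ast\Omega$, propagating $u\ge m$ from $\partial D$ and exploiting continuity of $u$ across the free boundary.'' This cannot work as stated. The Euler--Lagrange equation gives $\Delta u=\beta u\,\HH^{d-1}\llcorner\partial^\ast\Omega\ge 0$, so $u$ is \emph{sub}harmonic; there is no minimum principle, and a priori $u$ may dip to (or near) zero in the interior. The Harnack inequality only applies inside a connected open set where $u$ is harmonic and degenerates as the Harnack chain approaches $\partial\Omega$; it cannot be continued across the free boundary, and continuity of $u$ across $\partial\Omega$ yields no quantitative lower bound there (a continuous function can vanish on part of $\partial\Omega$, or on a sequence of components accumulating in $D$). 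Before any regularity of $\partial\Omega$ is known, one cannot even control the geometry of the components of $D\setminus\partial^\ast\Omega$. Ruling out this degeneration is precisely the core difficulty of the problem: the paper's Proposition \ref{p:non-degeneracy} does it by testing with the competitors $(u\vee t,\,\Omega\cup\{u\le t\})$ to get
\begin{equation*}
\int_{\{u<t\}}|\nabla u|^2\,dx\le \beta\,t^2\,\mathrm{Per}\big(\{u<t\}\big),
\end{equation*}
then combining the coarea formula, Cauchy--Schwarz and the isoperimetric inequality into a differential inequality for $f(t)=\int_0^t\mathrm{Per}(\{u\le s\})\,ds$, and finally iterating the resulting bound through the exponents $\alpha=2^{-n}$ to reach $C\le\beta|\{u\le t\}|$ for all $t>0$, which is then self-improved to a contradiction. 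Some argument of this type (or the De Giorgi iteration of Bucur--Giacomini, which you gesture at only for the H\"older continuity, not for the lower bound) is indispensable; without it, both (i) and (ii) remain unproved. A secondary, smaller gap: in the bootstrap for (iii), the passage from a $C^{1,\alpha}$ interface to $C^{1,\alpha}$ regularity of $u_\pm$ up to $\partial\Omega$ is not classical Schauder theory but requires the recent transmission-problem estimates (Caffarelli--Soria-Carro--Stinga, Dong) that the paper invokes.
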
	
 \begin{oss}
We notice that if $(u,\Omega)$ is a solution to \eqref{e:pb}, then $u$ is harmonic in the interior of $\Omega$ and $D\setminus\Omega$. Thus, as a consequence of Theorem \ref{t:main} (iii), in a neighborhood of a regular point $x_0\in\text{\rm Reg}(\partial\Omega)$, the functions $u:\overline\Omega\to\R$ and $u:\overline{D\setminus\Omega}\to\R$ are $C^\infty$ up to the free boundary $\partial\Omega$.
 \end{oss}	
\subsection{Outline of the proof and organization of the paper}\label{sub:sketch}
The main difficulty in the proof of Theorem \ref{t:main} is to prove the existence of a minimizing couple $(u,\Omega)$ and to show that the function $u$ is H\"older continuous and bounded from below by a strictly positive constant in $D$. The almost-minimality of the solutions is proved in Theorem \ref{t:regularity}. Finally, in the Appendix, we give examples of minimizers in domains $D$ symmetric with respect to the hyperplane $\{x_d=0\}$.


\subsubsection{Existence}The existence of a solution $(u,\Omega)$ and the regularity of $u$ (H\"older regularity and non-degeneracy) are treated simulatenously. The reason is that if $(u_n,\Omega_n)$ is a minimizing sequence for \eqref{e:pb}, then in order to get the compactness of $\Omega_n$, we need a uniform bound (from above) on the perimeter $\text{Per}(\Omega_n)$, for which we need the functions $u_n$ to be bounded from below by a strictly positive constant. Now, notice that we cannot simply replace $u_n$ by $u_n\vee \eps$, for some $\eps>0$; this is due to the fact that the second term in $J_\beta$ is increasing in $u$ :  $$\ds\int_{\partial^\ast\Omega_n}u_n^2\,d\HH^{d-1}\le \int_{\partial^\ast\Omega_n}(\eps\vee u_n)^2\,d\HH^{d-1}.$$ 
Thus, we select a minimizing sequence which is in some sense optimal. Precisely, we take $(u_n,\Omega_n)$ to be solution of the auxiliary problem
\begin{equation}\label{e:pb_approx_intro}
\min\Big\{J_\beta(u,\Omega)\ :\ u\in\mathcal V,\ \Omega\in\mathcal E,\ u\ge \frac1n\ \text{in}\ D\Big\}, 
\end{equation}
for which the existence of an optimal set is much easier (see Section \ref{s:approx}, Proposition \ref{p:existence:aux}). Still, we do not have a uniform (independent from $n$) bound from below for the functions $u_n$, so we still miss the uniform bound on the perimeter of $\Omega_n$.

On the other hand, we are able to prove that the sequence $u_n$ is uniformly H\"older continuous in $D$ (see Section \ref{s:approx}, Lemma \ref{p:holder}). This allows to extract a subsequence $u_n$ that  converges locally uniformly in $D$ to a non-negative H\"older continuous function $u_\infty:D\to\R$ (see Section \ref{s:existence}). Now, on each of the sets $\{u_\infty>t\}$, $t>0$, the sequence $\Omega_n$ has uniformly bounded perimeter. This allows to extract a subsequence $\Omega_n$ that converges pointwise almost-everywhere on $\{u_\infty>0\}$ to some $\Omega_\infty$. Thus, we have constructed our candidate for a solution: $(u_\infty,\Omega_\infty)$.

In order to prove that $(u_\infty,\Omega_\infty)$ is an admissible competitor in \eqref{e:pb}, we need to show that $\Omega_\infty$ has finite perimeter. We do this in Section \ref{s:existence}.
We first use the optimality of $(u_n,\Omega_n)$ to prove that $(u_\infty,\Omega_\infty)$ is optimal when compared to a special class of competitors. This optimality condition can be written as (we refer to  Lemma \ref{l:opt_cond} for the precise statment) :
\begin{equation}\label{e:fake_opt_cond}
J_\beta(u_\infty,\Omega_\infty)\le J_\beta(u_t,\Omega_t)\quad\text{where}\quad u_t=u_\infty\vee t\quad\text{and}\quad \Omega_t=\Omega_\infty\cup \{u_\infty\le t\},
\end{equation}
for any $t>0$. Next, from this special optimality condition we deduce that the function $u_\infty$ is bounded from below by a strictly positive constant (see Proposition \ref{p:non-degeneracy}). From this, in Section \ref{s:existence},  we deduce that $\Omega_\infty$ has finite perimeter in $\R^d$ and that the couple $(u_\infty,\Omega_\infty)$ is a solution to \eqref{e:pb}.


\subsubsection{H\"older continuity and non-degeneracy of $u$} Let now $(u,\Omega)$ be any solution of \eqref{e:pb}. In order to prove the H\"older continuity and the non-degeneracy of $u$ it is sufficient to exploit some of the estimates that we already used to prove the existence. Indeed, we can test the optimality of $(u,\Omega)$ with the competitors from \eqref{e:fake_opt_cond}. Thus, for $t>0$ small enough, we have 
\begin{equation}\label{e:fake_opt_cond2}
J_\beta(u,\Omega)\le J_\beta(u_t,\Omega_t)\quad\text{where}\quad u_t=u\vee t\quad\text{and}\quad \Omega_t=\Omega\cup \{u\le t\}.
\end{equation}
In particular, 
\begin{align}
\int_{D}|\nabla u|^2\,dx+\beta\int_{\partial^\ast\Omega}u^2\notag&\le \int_{D}|\nabla (u\vee t)|^2\,dx+\beta\int_{\partial^\ast(\Omega\cup\{u<t\})}u^2\notag\\
&\le \int_{D}|\nabla (u\vee t)|^2\,dx+\beta t^2\, \text{\rm Per}(\{u<t\})+\beta\int_{\{u> t\}\cap \partial^\ast\Omega}u^2,\notag
\end{align}
which proves that $u$ satisfies the optimality condition \eqref{e:opt_cond} from Lemma \ref{l:opt_cond}: 
\begin{equation}\label{e:opt_cond_intro}
\int_{\{u<t\}}|\nabla u|^2\,dx\le \beta\, t^2\, \text{\rm Per}\big(\{u<t\}\big).
\end{equation}
Now, applying Proposition \ref{p:non-degeneracy}, we get that $u$ is bounded from below by a strictly positive constant in $D$. Finally, Proposition \ref{p:holder} gives that $u$ is H\"older continuous in $D$. This proves Theorem \ref{t:main} (ii). 

\subsubsection{Regularity of the free boundary} In order to prove the regularity of the free boundary (Theorem \ref{t:main} (iii)), we use the H\"older continuity and the non-degeneracy of $u$ to show that a solution $\Omega$ is an almost-minimizer of the perimeter. We do this in Theorem \ref{t:regularity}.  Now, from the classical regularity theory for almost-minizers of the perimeter (see \cite{tamanini}), we obtain that (inside $D$) the free boundary $\partial\Omega$ can be decomposed into a $C^{1,\alpha}$-regular part $\text{Reg}(\partial\Omega)$ and a (possibly empty) singular part of Hausdorff dimension smaller than $d-8$.

Finally, in Theorem \ref{t:higher_regularity}, we prove the $C^\infty$ regularity of $\text{Reg}(\partial\Omega)$. In order to do so, we first show (see Lemma \ref{l:robin_condition}) that in a neighborhood of a regular point $x_0$, the restrictions $u_+$ and $u_-$ of $u$ on $\Omega$ and $D\setminus\Omega$ are solutions of the following transmission problem: 
$$\begin{cases}
\Delta u_+=0\quad\text{in}\quad \Omega,\\
\Delta u_-=0\quad\text{in}\quad D\setminus\overline\Omega,\\
u_+=u_-=u\quad\text{on}\quad\partial\Omega,\\
\ds\frac{\partial u_+}{\partial \nu_\Omega}-\frac{\partial u_-}{\partial \nu_\Omega}+2\beta u=0\quad\text{on}\quad \partial\Omega,
\end{cases}$$
where $\nu_\Omega$ is the normal derivative to $\partial\Omega$. Now, using the recent results \cite{css} and \cite{D}, we get that $u_+$ and $u_-$ are as regular as the free boundary $\partial\Omega$ (see Lemma \ref{l:reg1}). On the other hand, using variations of $u$ along smooth vector fields, we obtain that $\text{Reg}(\partial\Omega)$ solves an equation of the form 
$$\text{``Mean curvature of $\partial\Omega$"}=F(\nabla u_+,\nabla u_-,u_\pm)\quad\text{on}\quad \partial\Omega,$$
where $F$ is an explicit (rational) function of $\nabla u_\pm$ and $u$.
In particular, this implies that $\partial\Omega$ gains one more derivative with respect to $u$, that is, $u\in C^{k,\alpha}\,\Rightarrow\,\partial\Omega\in C^{k+1,\alpha}$. Thus, by a bootstrap argument, we get that the regular part of the free boundary is $C^\infty$.

\subsection{On the non-degeneracy of the solutions}
We notice that the competitors $(u_t,\Omega_t)$ in \eqref{e:fake_opt_cond} are the two-phase analogue of the ones used by Caffarelli and Kriventsov in \cite{ck}, where the authors study a one-phase version of \eqref{e:pb}. Nevertheless, the functional in  \cite{ck} involves the measure of $\Omega$, which means that the optimality condition there corresponds to
\begin{equation*}
J_\beta(u,\Omega)+\overline C|\Omega\cap\{u\le t\}|\le J_\beta(u_t,\Omega_t)\quad\text{where}\quad u_t=u\vee t\quad\text{and}\quad \Omega_t=\Omega\setminus  \{u\le t\},
\end{equation*}
where $\bar C>0$. The presence of the constant $\bar C$ allows to prove the bound from below by using a differential inequality for a suitably chosen function $f(t)$, which is given in terms of $u$ and $\{u<t\}$ (see Proposition \ref{p:non-degeneracy} and \cite[Theorem 3.2]{ck}). In Proposition \ref{p:non-degeneracy}, we exploit the same idea, but since we do not have the constant $\bar C$, we can only conclude that $f(t)\ge \eps t$ (which is not in contradiction with the fact that $f(t)$ is defined for every $t>0$). So, we continue, and we use this lower bound to obtain a bound of the form 
\begin{equation}\label{e:non-deg-intro-0}
c\le \beta^{\sfrac12}\text{Per}(\{u<t\})^{\sfrac12}|\{u<t\}|^{\sfrac12}\quad\text{for every}\quad t>0,
\end{equation}
where $u:=u_\infty$ and $c$ is a constant depending on $\beta$ and $d$. Then, we notice that this entails 
$$c\le \beta^{\sfrac34}\text{Per}(\{u<t\})^{\sfrac14}|\{u<t\}|^{\sfrac34}\quad\text{for every}\quad t>0.$$
and we use an iteration procedure to get that 
$$c\le \beta^{1-\sfrac1{2^n}}\text{Per}(\{u<t\})^{\sfrac1{2^n}}|\{u<t\}|^{1-\sfrac1{2^n}}\quad\text{for every}\quad t>0.$$
Passing to the limit as $n\to\infty$, we get that if $u$ is not bounded away from zero, then
\begin{equation}\label{e:non-deg-intro}
c\le \beta |\{u<t\}|\le\beta |D|\quad\text{for every}\quad t>0.
\end{equation}
Now, this means that the measure of the zero-set $|\{u=0\}|$ is bounded from below. Thus, using again the optimality of $u$, we get that \eqref{e:non-deg-intro-0} holds with an arbitrary small $\eps>0$ in place of $\beta$, we get that `
\begin{equation*}
c\le \eps |\{u<t\}|\quad\text{for every}\quad t>0,
\end{equation*}
which is impossible.

A similar non-degeneracy result was proved by Bucur and Giacomini in \cite{bg} by a De Giorgi iteration scheme\footnote{We are grateful to the anonymous referee for bringing  to our attention the reference \cite{bg}.}. Precisely, one can prove that any solution to \eqref{e:pb} satisfies the optimality condition from \cite[Remark 3.7]{bg}. Thus, \cite[Theorem 3.5]{bg} also to the solutions of \eqref{e:pb}. Conversely, the argument from \ref{p:non-degeneracy} can be applied to the minimizers from \cite{bg} to obtain the bound from below from \cite[Theorem 3.5]{bg}.

\subsection{One-phase and two-phase problems with Robin boundary conditions}
The problem \eqref{e:pb} is the first instance of a two-phase free boundary problem with Robin boundary conditions. Precisely, we notice that if $\Omega$ is a fixed set with smooth boundary and if $u$ minimizes the functional $J_\beta(\cdot,\Omega)$ in $H^1(D)$, then the functions 
$$u_+:=u\quad\text{on}\quad \overline\Omega\qquad\text{and}\qquad u_-:=u\quad\text{on}\quad D\setminus\Omega,$$
are harmonic respectively in $\Omega$ and $D\setminus\overline\Omega$, and satisfy the following conditions on $\partial\Omega\cap D$:
\begin{equation}\label{e:two-phase-optimality}
u_+=u_-\quad\text{and}\quad\left(\frac{\partial u_+}{\partial \nu_+}+\frac\beta2 u_+\right)+\left(\frac{\partial u_-}{\partial \nu_-}+\frac\beta2u_-\right)=0\quad\text{on}\quad \partial\Omega\cap D,
\end{equation}
where $\nu_+$ and $\nu_-$ are the exterior and the interior normals to $\partial\Omega$.  Notice that \eqref{e:two-phase-optimality} is a two-phase counterpart of the one-phase problem 
\begin{equation}\label{e:one-phase-optimality}
\Delta u=0\quad\text{in}\quad\Omega,\qquad \frac{\partial u}{\partial \nu}+\beta u=0\quad\text{on}\quad \partial\Omega\cap D,
\end{equation}
which was studied by Bucur-Luckhaus in \cite{bl} and Caffarelli-Kriventsov in \cite{ck}. As explained in \cite{ck}, the Robin condition in \eqref{e:one-phase-optimality} naturally arises in the physical situation in which the heat diffuses freely in $\Omega$, the temperature is set to be zero on the surface $\partial\Omega$, which is separated from the interior of $\Omega$ by an inifinitesimal insulator. The two-phase problem \eqref{e:two-phase-optimality} also may be interpreted in this way, in this case the heat diffuses freely both inside $\Omega$ and outside, in $D\setminus\overline\Omega$; the temperature is set to be zero on the surface $\partial\Omega$, which is insulated from both sides; the continuity of the temperature means that the heat transfer is allowed also across $\partial\Omega$, which happens for instance if the surface $\partial\Omega$ is replaced by a very thin (infinitesimal) net. 
\medskip

Even if the problems in \cite{bl}-\cite{ck} and in the present paper lead to the free boundary conditions of the same type, the techniques are completely different. For instance, the problem studied in  \cite{bl}-\cite{ck} is a free discontinuity problem as the function $u$ jumps from positive in $\Omega$ to zero in $D\setminus \overline\Omega$. Thus, the corresponding variational minimization problem can be naturally stated in the class of SBV functions, which clearly influences both the existence and the regularity techniques; roughly speaking, the existence is obtained through a compactness theorem in the SBV class, while the regularity relies on techniques related to the Mumford-Shah functional. 

In our case, the problem can be stated for the functions $(u_1,u_2)$ with disjoint supports ($u_1u_2=0$ almost-everywhere in $D$) which satisfy the following constraints: the sum $u_1+u_2$ should be a Sobolev function (this corresponds to the continuity condition in \eqref{e:two-phase-optimality}); $u_1^2$ and $u_2^2$ are SBV funtions whose jump sets are contained in the boundary of the positivity sets $\{u_1>0\}$ and $\{u_2>0\}$. Now, it is reasonable to expect that an existence result can be proved also in this class, but then, in order to prove that a solution to \eqref{e:pb} exists, one should show that $u_1$ and $u_2$ are of the form $u_1=u\ind_{\Omega}$ and $u_2=u\ind_{D\setminus \Omega}$ for a set of finite perimeter $\Omega\subset\R^d$, $u$ being the sum $u_1+u_2$. Summarizing, working in the class of SBV functions would allow to state \eqref{e:pb} in a weaker form, but it doesn't seem to be a shortcut to the existence of a solution (of \eqref{e:pb}) as it will require the analysis of the jump sets of the optimal couples in the SBV class. 
 Thus, we prefer not to rely on the advanced compactness results for SBV functions, but to prove the existence of a solution from scratch. 
 
  Finally, as explained in Section \ref{sub:sketch}, once we know that an optimal couple $(u,\Omega)$ exists, and that $u$ is  non-degenerate and H\"older continuous, the regularity of the free boundary $\partial\Omega$ follows immediately since the set $\Omega$ becomes an almost-minimizer of the perimeter.


\section{Preliminaries}\label{s:prelimiaries}
\subsection{Sets of finite perimeter}\label{sub:perimeter}
Let $A\subset\R^d$ be a an open set in $\R^d$. We recall that the set $E\subset\R^d$ is said to have a {\it finite perimeter in $A$} if 
\begin{equation}\label{e:per}\text{Per}(E,A)=\sup\Big\{\int_{A}\text{div}\,\xi(x)\,dx\ :\ \xi\in C^1_c(A;\R^d),\ \sup_{x\in \R^d}|\xi (x)|\le 1\Big\},
\end{equation}
is finite. We say that $E$ has a {\it locally finite perimeter in $A$}, if for every open set $B\subset \R^d$ such that $\overline B\subset A$, we have that $\text{Per}(E,B)<\infty$. We say that {\it $E$ is of finite perimeter} if 
$$\text{Per}(E):=\text{Per}(E,\R^d)<+\infty.$$
By the De Giorgi structure theorem (see for instance \cite[Theorem II.4.9]{maggi}), if the set $E\subset \R^d$ has locally finite perimeter in $A$, then there is a set $\partial^\ast E\subset A\cap \partial E$ called {\it reduced boundary} such that 
$$\text{Per}(E,B)=\HH^{d-1}(B\cap\partial^\ast E)\quad\text{for every set}\quad B\subset\subset A,$$
where $\HH^{d-1}$ is the $(d-1)$-dimensional Hausdorff measure in $\R^d$.
Moreover, there is a $\HH^{d-1}$-measurable map $\nu_E:\partial^\ast E\to\R^d$, called {\it generalized normal} such that $|\nu_E|=1$ and 
$$\int_E\text{div}\,\xi(x)\,dx=\int_{\partial^\ast E}\nu_E\cdot \xi\,d\HH^{d-1}\quad\text{for every}\quad \xi\in C^1_c(A;\R^d).$$
\subsection{Capacity and traces of Sobolev functions}
We define the capacity (or the $2$-capacity) of a set $E\subset\R^d$ as
$$\cp (E)=\inf\Big\{\|u\|_{H^1(\R^d)}^2\ :\ u\in H^1(\R^d),\ u\ge 1\ \text{in a neighborhood of}\ E\Big\}.$$
Suppose now that $d\ge 3$. It is well-known that the sets of zero capacity have zero $d-1$ dimensional Hausdorff measure (see for instance \cite[Section 4.7.2, Theorem 4]{eg}), that is, 
$$\text{If}\quad \cp(E)=0\ ,\quad\text{then}\quad \HH^{d-1}(E)=0.$$
Moreover, the Sobolev functions are defined up to a set of zero capacity (i.e. {\it quasi-everywhere}), that is, if $A\subset \R^d$ is an open set and $u\in H^1(A)$, then there is a set $\mathcal N_u\subset\R^d$ such that $\cp\,(\mathcal N_u)=0$ and 
$$u(x_0)=\lim_{r\to0}\frac1{|B_r|}\int_{B_r(x_0)}u(x)\,dx\qquad\text{for every}\qquad x_0\in A\setminus \mathcal N_u.$$
Moreover, for every function $u\in H^1(A)$ there is a sequence $u_n\in C^\infty(A)\cap H^1(A)$ and a set $\mathcal N\subset A$ of zero capacity such that: 
\begin{itemize}
\item $u_n$ converges to $u$ strongly in $H^1(A)$;
\item $\ds u(x)=\lim_{n\to\infty}u_n(x)$ for every $x\in A\setminus(\mathcal N\cup\mathcal N_u)$.
\end{itemize}
In particular, if $E\subset\R^d$ is a set of locally finite perimeter in the open set $A\subset\R^d$ and if $u\in H^1(A)$, then the function $u^2$ is defined $\HH^{d-1}$-almost everywhere on $\partial^\ast E$ and is $\HH^{d-1}$ measurable on $\partial^\ast E$. Thus, the integral 
$$\mathcal I(u,E):=\int_{A\cap \partial^\ast E}u^2\,d\HH^{d-1}\quad\text{is well-defined}.$$
As a consequence of the discussion above, we have the following proposition. 
\begin{prop}\label{prop0}
Suppose that $D$ is a smooth bounded open set in $\R^d$ and that $u\in H^1(D)$ is a  Sobolev function. Then, there is a set $\mathcal N_u\subset D$ such that $\HH^{d-1}(\mathcal N_u)=0$ and  $$u(x_0)=\lim_{r\to0}\frac1{|B_r|}\int_{B_r(x_0)}u(x)\,dx\qquad\text{for every}\qquad x_0\in D\setminus \mathcal N_u.$$ 
Moreover, if $E\subset\R^d$ is a set of locally finite perimeter in $\R^d$, then the function $u:\partial^\ast E\cap D\to\R$ is defined $\HH^{d-1}$-almost everywhere and is  $\HH^{d-1}$-measurable on $\partial^\ast E\cap D$. In particular, the integral
$\ds \mathcal I(u,D)$ is well-defined.
\end{prop}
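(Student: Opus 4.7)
The plan is to reduce everything to the general facts about Sobolev functions recalled in the subsection preceding the statement, after extending $u$ to the whole of $\R^d$. Since $D$ is a smooth bounded open set, the standard extension theorem yields a function $\widetilde u\in H^1(\R^d)$ with $\widetilde u=u$ almost everywhere in $D$, so we may work directly on $\R^d$ and then restrict to $D$.

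For the first assertion, I apply to $\widetilde u$ the fact recalled just above that every $H^1(\R^d)$ function admits a Lebesgue representative outside a set of zero $2$-capacity: there exists $\widetilde{\mathcal N}\subset\R^d$ with $\cp(\widetilde{\mathcal N})=0$ such that
$$\widetilde u(x_0)=\lim_{r\to 0}\frac{1}{|B_r|}\int_{B_r(x_0)}\widetilde u(x)\,dx\qquad\text{for every}\qquad x_0\in\R^d\setminus\widetilde{\mathcal N}.$$
Setting $\mathcal N_u:=D\cap\widetilde{\mathcal N}$, the monotonicity of capacity gives $\cp(\mathcal N_u)=0$, and the comparison $\cp(\cdot)=0\Rightarrow\HH^{d-1}(\cdot)=0$ (valid in every dimension $d\ge 2$, as it follows from the general implication $\cp(E)=0\Rightarrow\HH^s(E)=0$ for all $s>d-2$) yields $\HH^{d-1}(\mathcal N_u)=0$. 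Restricting the identity above to $x_0\in D\setminus\mathcal N_u$ gives the first claim.

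For the second part, let $E\subset\R^d$ be a set of locally finite perimeter. By the De Giorgi structure theorem recalled in Section~\ref{sub:perimeter}, $\partial^\ast E\cap D$ is a Borel set on which $\HH^{d-1}$ is a Radon measure. Since $\HH^{d-1}(\mathcal N_u)=0$, the Lebesgue representative of $u$ is defined at every point of $(\partial^\ast E\cap D)\setminus\mathcal N_u$, hence $\HH^{d-1}$-almost everywhere on $\partial^\ast E\cap D$. To obtain measurability, I pick, as in the paragraph preceding the statement, a sequence $u_n\in C^\infty(\R^d)\cap H^1(\R^d)$ converging to $\widetilde u$ strongly in $H^1(\R^d)$ and pointwise outside a set $\mathcal N$ of zero capacity, hence of zero $\HH^{d-1}$ measure. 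Each $u_n$ is continuous, hence Borel measurable on $\partial^\ast E\cap D$, and $u$ is the pointwise limit of the $u_n$'s on $(\partial^\ast E\cap D)\setminus(\mathcal N\cup\mathcal N_u)$, a set of full $\HH^{d-1}$ measure in $\partial^\ast E\cap D$. A pointwise limit of measurable functions being measurable, $u$, and therefore $u^2\ge 0$, is $\HH^{d-1}$-measurable on $\partial^\ast E\cap D$, so that the integral $\mathcal I(u,D)$ is well-defined as an element of $[0,+\infty]$.

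No step presents a genuine obstacle: the whole proposition is essentially the restriction to $D$ of the standard properties of $H^1(\R^d)$-functions recalled in Section~\ref{s:prelimiaries}. The only mildly delicate point is the capacity--Hausdorff comparison in the borderline dimension $d=2$, for which one uses that sets of zero $2$-capacity in $\R^2$ have Hausdorff dimension zero and thus $\HH^{1}$-measure zero.
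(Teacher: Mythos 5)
Your argument is correct and is essentially the derivation the paper itself intends: the proposition is stated there ``as a consequence of the discussion above,'' and you have simply written out that derivation (extension to $\R^d$, quasi-everywhere Lebesgue representative, the implication $\cp=0\Rightarrow\HH^{d-1}=0$, and measurability via pointwise limits of smooth approximants on a set of full $\HH^{d-1}$-measure in $\partial^\ast E\cap D$). The only divergence is the borderline case $d=2$: the paper's remark passes to $W^{1,p}(D)$ for $p<2$ close to $2$ to get $\HH^{s}=0$ for all $s>2-p$, whereas you invoke directly that null sets for the $2$-capacity in $\R^2$ have Hausdorff dimension zero; both are valid.
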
	
\begin{oss}
In the case $d=2$, \eqref{prop0} still holds. In fact, it is sufficient to notice that if $u\in H^1(D)$, then $u\in W^{1,p}(D)$ for any $1<p<2$. In particular, it is sufficient to results from \cite{eg}, this time in the space $W^{1,p}(D)$, for $p$ close to $2$.  
\end{oss}
In the next subsection, we will go through the main properties of this functional, which we will need in the proof of Theorem \ref{t:main}.
\subsection{Properties of the functional $\mathcal I$}
We first notice that we can use an integration by parts to write $\mathcal I$ as in \eqref{e:per}.
\begin{lemma}
Let $E\subset\R^d$ be a set of locally finite perimeter in the open set $A\subset\R^d$ and let $u\in H^1(A)$ be locally bounded in $A$. Then, the following holds.
\begin{enumerate}[\rm (i)]
\item For every $\xi\in C^1_c(A;\R^d)$ we have 	 
\begin{equation}\label{e:ipp}
\int_{A\cap\partial^\ast E}(\xi\cdot\nu_E)\, u^2\,d\HH^{d-1}=\int_{A}\text{\rm div}\,(u^2\xi)\,dx\ .
\end{equation}
\item We have the formula
\begin{equation}\label{e:formula}
\int_{A\cap\partial^\ast E}u^2\,d\HH^{d-1}=\sup\left\{\int_{A\cap\partial^\ast E}(\xi\cdot\nu_E)\, u^2\,d\HH^{d-1}\ :\ \xi\in C^1_c(A;\R^d),\ |\xi|\le 1\right\}.
\end{equation}
\end{enumerate}
\end{lemma}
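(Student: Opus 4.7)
For part (i), the natural approach is to approximate $u$ by smooth functions and reduce to De Giorgi's formula
$$\int_E\text{\rm div}\,\eta\,dx=\int_{A\cap\partial^\ast E}\nu_E\cdot\eta\,d\HH^{d-1},$$
which is valid for $\eta\in C^1_c(A;\R^d)$ (this is the right-hand side of (i), up to what appears to be a typo $\int_A$ in place of $\int_E$). Fix an open set $A'\Subset A$ containing $\text{supp}\,\xi$. By convolution with a compactly supported mollifier, construct a sequence $u_n\in C^\infty(A)\cap H^1(A)$ with $u_n\to u$ in $H^1(A')$, $\|u_n\|_{L^\infty(A')}\le C$ uniformly (using the local boundedness of $u$), and, along a subsequence, $u_n\to u$ pointwise $\HH^{d-1}$-a.e.\ on $\partial^\ast E\cap A'$; the latter is possible since Sobolev functions coincide quasi-everywhere with their precise representative, and capacity-zero sets have zero $\HH^{d-1}$-measure (Proposition~\ref{prop0}). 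Applying De Giorgi's formula to $\eta=u_n^2\xi\in C^1_c(A;\R^d)$ yields the desired identity for each $u_n$.

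Then pass to the limit as $n\to\infty$. For the volume term, expand
$$\text{\rm div}(u_n^2\xi)=2u_n\nabla u_n\cdot\xi+u_n^2\,\text{\rm div}\,\xi,$$
whose first summand converges in $L^1(A')$ to $2u\nabla u\cdot\xi$ (by uniform $L^\infty$ control and strong $L^2$ convergence of the gradients), while the second converges in $L^1(A')$ by dominated convergence. For the boundary term, $\HH^{d-1}(A\cap\partial^\ast E\cap\overline{A'})<+\infty$ by the locally finite perimeter hypothesis; combined with the uniform $L^\infty$ bound on $u_n$ and the $\HH^{d-1}$-a.e.\ convergence $u_n\to u$, dominated convergence on $A\cap\partial^\ast E$ gives the convergence of the surface integrals, and (i) follows.

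For part (ii), the inequality ``$\sup\le\int u^2$'' is immediate from $|\xi\cdot\nu_E|\le|\xi|\le 1$. For the opposite inequality, regard $\mu:=u^2\nu_E\,\HH^{d-1}\lfloor(A\cap\partial^\ast E)$ as an $\R^d$-valued Radon measure on $A$, whose total variation satisfies $|\mu|=u^2\,\HH^{d-1}\lfloor(A\cap\partial^\ast E)$. The Riesz representation theorem yields
$$|\mu|(A)=\sup\left\{\int_A\xi\cdot d\mu\,:\,\xi\in C^0_c(A;\R^d),\ |\xi|\le 1\right\}.$$
Mollifying a maximizing $C^0_c$ sequence (and multiplying by a smooth cutoff so that the approximants remain compactly supported in $A$) produces $C^1_c(A;\R^d)$ vector fields with $|\xi|\le 1$ that achieve the supremum up to arbitrarily small error, which gives (ii).

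The main obstacle is the convergence of the boundary integral in part (i): $H^1$ convergence alone does not control pointwise values on the $(d-1)$-dimensional set $\partial^\ast E$, so one cannot simply pass to the limit via a standard trace theorem on sets of finite perimeter. The remedy is to combine the quasi-everywhere pointwise convergence of the precise representatives (from the capacity discussion in Section~\ref{s:prelimiaries}) with the uniform $L^\infty$ bound on $u_n$ provided by the local boundedness hypothesis on $u$, so that a single application of dominated convergence on $\partial^\ast E$ suffices.
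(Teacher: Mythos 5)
Your proof is correct, and part (i) coincides with the paper's argument: the authors dispose of (i) in one sentence (``a classical approximation argument with functions of the form $\phi_n\ast u$''), and your write-up supplies exactly the details they omit, including the key point that the surface integrals converge because mollifications converge quasi-everywhere (hence $\HH^{d-1}$-a.e.\ on $\partial^\ast E$) and are uniformly bounded. You are also right that the $\int_A$ on the right-hand side of \eqref{e:ipp} should be $\int_{A\cap E}$, as is confirmed by the way the identity is used in the proof of Lemma \ref{l:semicontinuity}.

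For part (ii), however, your route is genuinely different from the paper's. You treat $\mu=u^2\nu_E\,\HH^{d-1}\lfloor(A\cap\partial^\ast E)$ as an $\R^d$-valued Radon measure, invoke the duality characterization of its total variation over $C^0_c$ vector fields, and then mollify a near-optimal continuous field. The paper instead stays at the level of the perimeter itself: it exhausts $A$ by open sets $A_n\subset\subset A$, uses the definition \eqref{e:per} to pick $\xi_n$ with $|\xi_n|\le1$ satisfying $0\le\text{\rm Per}(E,A_n)-\int_{\partial^\ast E}(\xi_n\cdot\nu_E)\,d\HH^{d-1}\le\frac{1}{nM_n}$ with $M_n=\sup_{A_n}u^2$, and then observes that $\int_{A_n\cap\partial^\ast E}(1-\xi_n\cdot\nu_E)u^2\le M_n\cdot\frac1{nM_n}=\frac1n$. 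The paper's argument is more elementary and self-contained (it needs nothing beyond the definition of perimeter), but it leans on the local boundedness of $u$ in an essential way to absorb the calibration error; your argument offloads the work to the Riesz representation theorem and would go through verbatim for any nonnegative locally $\HH^{d-1}\lfloor\partial^\ast E$-integrable weight in place of $u^2$. Both are complete proofs; neither has a gap.
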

\begin{proof}
The first claim follows by a classical approximation argument with functions of the form $\phi_n\ast u$, where $\phi_n$ is a sequence of mollifiers. In order to prove claim (ii), we notice that 
\begin{equation*}
\int_{A\cap\partial^\ast E}u^2\,d\HH^{d-1}\le \sup\left\{\int_{A\cap\partial^\ast E}(\xi\cdot\nu_E)\, u^2\,d\HH^{d-1}\ :\ \xi\in C^1_c(A;\R^d),\ |\xi|\le 1\right\}.
\end{equation*}
Thus, it is sufficient to find a sequence $\xi_n\in C^1_c(A;\R^d)$, $|\xi_n|\le 1$, such that 
$$\int_{A\cap\partial^\ast E}u^2\,d\HH^{d-1}= \lim_{n\to\infty}\int_{A\cap\partial^\ast E}(\xi_n\cdot\nu_E)\, u^2\,d\HH^{d-1}.$$
Let $A_n$ be a sequence of open sets such that $A_n\subset\subset A$ and $\ind_{A_n}\to\ind_A$. 
Then 
$$\int_{A\cap\partial^\ast E}u^2\,d\HH^{d-1}= \lim_{n\to\infty}\int_{A_n\cap\partial^\ast E}u^2\,d\HH^{d-1}.$$
Setting $\ds M_n=\sup_{A_n} u^2$, we can find $\xi_n\in C^1_c(A;\R^d)$ such that $|\xi_n|\le 1$, and 
$$0\le \text{Per}(E,A_n)-\int_{\partial^\ast E}(\xi_n\cdot\nu_E)\,d\HH^{d-1}\le \frac1{nM_n}.$$
In particular, this implies that
$$0\le \int_{A_n\cap\partial^\ast E}u^2\,d\HH^{d-1}-\int_{A_n\cap\partial^\ast E}(\xi_n\cdot \nu_E)u^2\,d\HH^{d-1}\le \frac1n,$$
which concludes the proof.
\end{proof}	
\begin{lemma}[Main semicontinuity lemma]\label{l:semicontinuity}
	Suppose that $A\subset\R^d$ is a bounded open set and that $h:A\to \R$ is a non-negative function in $L^1(A)$. Let $u_n\in H^1(A)$ be a sequence of functions and $\Omega_n\subset \R^d$ be a sequence of sets of locally finite perimeter in $A$ such that: 
	\begin{enumerate}[\quad (a)]
		\item $0\le u_n\le h$ in $A$, for every $n\in\N$;	
		\item there is a function $u_\infty\in H^1(A)$ such that $u_n$ converges to $u_\infty$ weakly in $H^1(A)$ and pointwise almost-everywhere in $A$;
		\item there is a set $\Omega_\infty\subset \R^d$ of locally finite finite perimeter in $A$ such that the sequence of characteristic functions $\ind_{\Omega_n}$ converges to $\ind_{\Omega_\infty}$ pointwise almost-everywhere in $A$.
	\end{enumerate}	
	Then, 
	\begin{equation}\label{e:semicontinuity}
	\int_{A\cap\partial^\ast\Omega_\infty}u_\infty^2\,d\HH^{d-1}\le \liminf_{n\to\infty} \int_{A\cap\partial^\ast\Omega_n}u_n^2\,d\HH^{d-1}.
	\end{equation}
\end{lemma}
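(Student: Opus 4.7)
The plan is to combine the dual representation \eqref{e:formula} with the Gauss--Green identity \eqref{e:ipp}, and to argue by truncation since the assumption $u_n\le h\in L^1(A)$ does not provide the local boundedness required by \eqref{e:ipp}.

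Fix $\xi\in C^1_c(A;\R^d)$ with $|\xi|\le 1$ and a level $M>0$, and set $u_n^M:=u_n\wedge M$, $u_\infty^M:=u_\infty\wedge M$. Since $u_n^M$ is bounded, \eqref{e:ipp} applies and gives
$$\int_{A\cap\partial^\ast\Omega_n}(\xi\cdot\nu_{\Omega_n})(u_n^M)^2\,d\HH^{d-1}=\int_{\Omega_n}\mathrm{div}\big((u_n^M)^2\xi\big)\,dx,$$
while $(u_n^M)^2\le u_n^2$ forces the left-hand side to be bounded above by $\int_{A\cap\partial^\ast\Omega_n}u_n^2\,d\HH^{d-1}$. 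The sequence $u_n^M$ is uniformly bounded in $H^1(A)\cap L^\infty(A)$ and converges pointwise a.e.\ to $u_\infty^M$, so $\nabla u_n^M\rightharpoonup \nabla u_\infty^M$ weakly in $L^2(A)$ (uniqueness of the weak $H^1$-limit). Expanding
$$\mathrm{div}\big((u_n^M)^2\xi\big)=2\,u_n^M\,\xi\cdot\nabla u_n^M + (u_n^M)^2\,\mathrm{div}\,\xi,$$
the term $\ind_{\Omega_n}(u_n^M)^2\mathrm{div}\,\xi$ converges in $L^1(A)$ to $\ind_{\Omega_\infty}(u_\infty^M)^2\mathrm{div}\,\xi$ by dominated convergence. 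For the gradient term I split
$$\ind_{\Omega_n}u_n^M\xi \;=\; (\ind_{\Omega_n}-\ind_{\Omega_\infty})u_n^M\xi \;+\; \ind_{\Omega_\infty}u_n^M\xi,$$
both pieces converge strongly in $L^2(A)$ (the first to $0$, the second to $\ind_{\Omega_\infty}u_\infty^M\xi$) by dominated convergence using the pointwise bound $M$, and pairing with $\nabla u_n^M\rightharpoonup \nabla u_\infty^M$ concludes the passage to the limit in $\int_{\Omega_n}\mathrm{div}((u_n^M)^2\xi)\,dx$. Reading \eqref{e:ipp} in reverse on the limit then yields
$$\liminf_{n\to\infty}\int_{A\cap\partial^\ast\Omega_n}u_n^2\,d\HH^{d-1}\;\ge\; \int_{A\cap\partial^\ast\Omega_\infty}(\xi\cdot\nu_{\Omega_\infty})(u_\infty^M)^2\,d\HH^{d-1}.$$

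Taking the supremum over admissible $\xi$ and invoking \eqref{e:formula} upgrades the right-hand side to $\int_{A\cap\partial^\ast\Omega_\infty}(u_\infty^M)^2\,d\HH^{d-1}$. Since $(u_\infty^M)^2$ increases pointwise to $u_\infty^2$ as $M\to\infty$, a final application of monotone convergence on $A\cap\partial^\ast\Omega_\infty$ delivers \eqref{e:semicontinuity}. The hard step is the passage to the limit at fixed $M$: only pointwise a.e.\ convergence is available for the indicators $\ind_{\Omega_n}$, only weak $L^2$ convergence for the gradients $\nabla u_n^M$, and the integrand is nonlinear in $u_n^M$. The truncation at level $M$ is precisely what provides the $L^\infty$ control that legitimizes the splitting and dominated-convergence arguments above.
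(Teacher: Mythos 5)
Your proof is correct and follows essentially the same route as the paper's: represent the surface integral by duality against vector fields $\xi\in C^1_c(A;\R^d)$, convert to a bulk integral over $\Omega_n$ via Gauss--Green, pass to the limit using the pointwise convergence of $\ind_{\Omega_n}$ (strong $L^2$ factor) against the weak convergence of the gradients, and take the supremum over $\xi$ at the end. The only difference is your truncation at level $M$, which is a worthwhile refinement: it supplies the local boundedness that the identity \eqref{e:ipp} formally requires and replaces the paper's domination by $h\in L^1(A)$ with a clean $L^\infty$ bound, at the harmless cost of a final monotone-convergence step on $\partial^\ast\Omega_\infty$.
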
	
\begin{proof}
	Notice that, for every $u\in H^1(A)$ and every set of finite perimeter $\Omega$, we have 
	$$\int_{A\cap\partial^\ast \Omega}u^2\,d\HH^{d-1}=\sup\left\{\int_{A\cap\partial^\ast \Omega}(\xi\cdot\nu_\Omega)\, u^2\,d\HH^{d-1}\ :\ \xi\in C^1_c(A;\R^d),\ |\xi|\le 1\right\},$$
	where $\nu_\Omega$ denotes the exterior normal to $\partial^\ast\Omega$. We use the notation 
	$$\nu_n:=\nu_{\Omega_n}\qquad\text{and}\qquad \nu_\infty:=\nu_{\Omega_\infty}.$$
	Let now $ \xi\in C^1_c(A;\R^d),\ |\xi|\le 1$ be fixed. By the divergence theorem, we have
	\begin{align*}
	\liminf_{n\to\infty}\int_{A\cap\partial^\ast \Omega_n}u_n^2\,d\HH^{d-1}&\ge	
	\liminf_{n\to\infty}\int_{A\cap\Omega_n}\text{div}\big(u_n^2\xi\big)\,dx = 	\liminf_{n\to\infty}\int_{A}\ind_{\Omega_n}\text{div}\big(u_n^2\xi\big)\,dx \\
	&= 	\liminf_{n\to\infty}\int_{A}\Big(2\big(u_n\ind_{\Omega_n}\xi\big)\cdot \nabla u_n+\big(u_n\,\ind_{\Omega_n}\big)\,(u_n\,\text{div}\,\xi)\Big)\,dx\,\\
	&= 	\int_{A}\Big(2\big(u_\infty\ind_{\Omega_\infty}\xi\big)\cdot \nabla u_\infty+\big(u_\infty\,\ind_{\Omega_\infty}\big)\,(u_\infty\,\text{div}\,\xi)\Big)\,dx\,\\
	&=\int_{A\cap\Omega_\infty}\text{div}\big(u_\infty^2\xi\big)\,dx=\int_{A\cap \partial^\ast \Omega_\infty}(\xi\cdot\nu_\infty)\,u_\infty^2\,d\HH^{d-1},
	\end{align*}
	where in order to pass to the limit we used that the sequence $u_n\ind_{\Omega_n}$ converges strongly in $L^2_{loc}(A)$ to $u_\infty\ind_{\Omega_\infty}$, as a consequence of the fact that it converges pointwise a.e. and is bounded by $h$. Now, taking the supremum over $\xi$, we get \eqref{e:semicontinuity}. 
\end{proof}	

\section{A family of approximating problems}\label{s:approx}
We use the notations $D,\beta,E,v, \mathcal E,\mathcal V$ from Section \ref{s:intro}. Moreover, we fix a constant 
$$\eps\in[0,m),$$ 
where $m$ is the lower bound of the function $v$, and we consider the auxiliary problem 
\begin{equation}\label{e:pb_aux}
\min\big\{J_\beta(\Omega,u)\ :\ \Omega\in \mathcal E,\ u\in \mathcal V,\ u\ge \eps\ \text{in}\ \R^d\big\}.
\end{equation}

\begin{prop}[Existence of a solution]\label{p:existence:aux}
Let $\mathcal E$ and $\mathcal V$ be as above.
Then, for every $0<\eps<m$, there is a solution to the problem \eqref{e:pb_aux}.  
\end{prop}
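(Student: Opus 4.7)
The plan is the direct method in the calculus of variations, exploiting crucially the constraint $u\ge\eps>0$. Fix a minimizing sequence $(u_n,\Omega_n)$ for \eqref{e:pb_aux}. The lower bound $u_n\ge\eps$ gives the crude but decisive perimeter estimate
$$\beta\,\eps^2\,\text{Per}(\Omega_n)\le \beta\int_{\partial^\ast\Omega_n}u_n^2\,d\HH^{d-1}\le J_\beta(u_n,\Omega_n)\le C,$$
so $\Omega_n$ has uniformly bounded perimeter. Combined with $\Omega_n=E$ outside $D$, De Giorgi's compactness for sets of finite perimeter yields a subsequence with $\ind_{\Omega_n}\to\ind_{\Omega_\infty}$ in $L^1(\R^d)$ and pointwise a.e., for some $\Omega_\infty$ of finite perimeter satisfying $\Omega_\infty=E$ outside $D$, hence $\Omega_\infty\in\mathcal E$.

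On the function side, the bound on the Dirichlet part of $J_\beta$ together with $u_n-v\in H^1_0(D)$ and Poincar\'e's inequality gives a uniform $H^1(D)$ bound on $u_n$. Along a further subsequence, $u_n\rightharpoonup u_\infty$ weakly in $H^1(D)$, strongly in $L^2(D)$, and pointwise a.e., with $u_\infty-v\in H^1_0(D)$ and $u_\infty\ge\eps$ a.e.\ in $D$ (the constraint set is convex and strongly closed, hence weakly closed). Extending $u_\infty$ by $v$ outside $D$ gives an element of $\mathcal V$ with $u_\infty\ge\eps$ in $\R^d$, so the candidate pair $(u_\infty,\Omega_\infty)$ is admissible.

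It remains to check $J_\beta(u_\infty,\Omega_\infty)\le \liminf_n J_\beta(u_n,\Omega_n)$. Weak lower semicontinuity of the Dirichlet term is classical. The boundary term is where the main technical obstacle lies, since the Main Semicontinuity Lemma (Lemma \ref{l:semicontinuity}) requires an integrable envelope $u_n\le h$, which is not directly available when $v$ is only $H^1_{loc}$. To get around this I truncate at an arbitrary height $M>0$: the sequence $u_n\wedge M$ converges to $u_\infty\wedge M$ weakly in $H^1(D)$ and pointwise a.e., is uniformly bounded by the constant $M\in L^1(D)$, and so Lemma \ref{l:semicontinuity} gives
$$\int_{\partial^\ast\Omega_\infty}(u_\infty\wedge M)^2\,d\HH^{d-1}\le \liminf_{n\to\infty}\int_{\partial^\ast\Omega_n}(u_n\wedge M)^2\,d\HH^{d-1}\le \liminf_{n\to\infty}\int_{\partial^\ast\Omega_n}u_n^2\,d\HH^{d-1}.$$
Letting $M\to\infty$ and applying monotone convergence on the left removes the truncation. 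Combining everything yields $J_\beta(u_\infty,\Omega_\infty)\le \inf$, so $(u_\infty,\Omega_\infty)$ is a minimizer.
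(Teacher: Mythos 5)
Your proof is correct and follows essentially the same route as the paper: the direct method, the perimeter bound $\text{Per}(\Omega_n)\le (\beta\eps^2)^{-1}J_\beta(v,E)$ coming from the constraint $u\ge\eps$, compactness for sets of finite perimeter and weak $H^1$ compactness, and Lemma \ref{l:semicontinuity} for the surface term. The only (harmless) difference is how the envelope hypothesis (a) of that lemma is arranged: the paper first replaces $u_n$ by $0\vee u_n\wedge h$, with $h$ the harmonic extension of $v$, after checking this does not increase $J_\beta$, whereas you truncate at a constant level $M$ and then remove the truncation by monotone convergence --- both devices work.
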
	
\begin{proof}
Let $(u_n,\Omega_n)$ be a minimizing sequence for \eqref{e:pb_aux}. Since 
$$\int_{D}|\nabla u_n|^2\,dx+\int_{\partial^\ast \Omega_n}u_n^2\,d\HH^{d-1}= J_\beta(u_n,\Omega_n)\le J_\beta(v,E),$$
for every $n\in\N$, we have 
$$\int_{D}|\nabla u_n|^2\,dx\le J_\beta(v,E)\qquad\text{and}\qquad \text{\rm Per}(\Omega_n)\le \frac{1}{\beta \eps^2}  J_\beta(v,E).$$
Thus, there are subsequences $u_n$ and $\Omega_n$ such that:
\begin{itemize}
\item $u_n$ converges strongly in $L^2(D)$, weakly in $H^1(D)$ and pointwise almost-everywhere to a function $u_\infty\in H^1(D)$;
\item $\ind_{\Omega_n}$ converges to $\ind_{\Omega_\infty}$ strongly in $L^1(D)$ and pointwise almost-everywhere.
\end{itemize}
Moreover, we can assume that $u_n\le h$ on $D$, where $h$ is the harmonic function :
$$\Delta h=0\quad\text{in}\quad D\ ,\qquad h-v\in H^1_0(D).$$
Indeed, we have 
\begin{align*}
\int_D|\nabla u_n|^2\,dx=\int_D|\nabla (u_n\wedge h)|^2\,dx&+\int_D|\nabla (u_n\vee h)|^2\,dx-\int_D|\nabla h|^2\,dx\ge \int_D|\nabla (u_n\wedge h)|^2\,dx\ ,\\
\text{and}\qquad \int_{\partial^\ast\Omega}u_n^2\,d\HH^{d-1}&\ge \int_{\partial^\ast\Omega}(u_n\wedge h)^2\,d\HH^{d-1}\ ,
\end{align*}
which gives that 
$$J_\beta (u_n\wedge h,\Omega_n)\le J_\beta (u_n,\Omega_n).$$
On the other hand, we have that 
$$J_\beta(u_n\vee0,\Omega_n)\le J_\beta(u_n,\Omega_n).$$
Thus, we can assume that $0\le u_n\le h$, for every $n\in\N$, and so the hypotheses of Lemma \ref{l:semicontinuity} are satisfied, which means that \eqref{e:semicontinuity} holds.
Moreover, by the semicontinuity of the $H^1$ norm we have 
$$\int_D|\nabla u_\infty|^2\,dx\le \liminf_{n\to\infty}\int_D|\nabla u_n|^2\,dx,$$
which finally implies that 
$$\qquad\qquad\qquad\qquad\qquad\qquad  J_\beta (u_\infty,\Omega_\infty)\le\liminf_{n\to\infty} J_\beta (u_n,\Omega_n).\qquad\qquad\qquad\qquad\qquad\quad \qedhere$$
\end{proof}

\begin{lemma}[Subharmonicity of the solutions]\label{l:subharmonicity}
Let $m>0$, $\beta>0$ and $\eps\in[0,m)$ be fixed. Let the function $u_\eps\in H^1(D)$ and the set of finite perimeter $\Omega_\eps$ be such that the couple $(u_\eps,\Omega_\eps)$ is a solution to the problem \eqref{e:pb_aux}. Then $u_\eps$ is subharmonic in $D$ and there is a positive Radon measure $\mu_\eps$ such that 
$$-\int_{D}\nabla u_\eps\cdot\nabla \varphi\,dx=\int_{D}\varphi\,d\mu_\eps\quad\text{for every}\quad \varphi\in H^1_0(D).$$
\end{lemma}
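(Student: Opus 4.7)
The plan is to test the minimality of $u_\eps$ against a carefully chosen one-sided perturbation and then read off the sign of $\Delta u_\eps$ via the Schwartz--Riesz representation of positive distributions. Fix $\varphi\in C_c^\infty(D)$ with $\varphi\ge 0$ and, for small $t>0$, introduce the competitor
\[
u_t:=(u_\eps-t\varphi)\vee\eps.
\]
Since $\varphi$ has compact support in $D$ and $v\ge m>\eps$, one has $u_t=u_\eps=v$ in a neighbourhood of $\partial D$, so $u_t\in\mathcal V$; moreover $u_t\ge\eps$ in $\R^d$. Hence $(u_t,\Omega_\eps)$ is admissible in \eqref{e:pb_aux}.

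Because $u_t\le u_\eps$ pointwise and both are non-negative, we have $u_t^2\le u_\eps^2$, so the boundary term in $J_\beta$ can only decrease under the perturbation; combined with $J_\beta(u_\eps,\Omega_\eps)\le J_\beta(u_t,\Omega_\eps)$ this gives the Dirichlet-energy inequality
\[
\int_D|\nabla u_\eps|^2\,dx\le\int_D|\nabla u_t|^2\,dx.
\]
By Stampacchia's lemma, $\nabla u_t=(\nabla u_\eps-t\nabla\varphi)\,\mathbbmm{1}_{\{u_\eps>\eps+t\varphi\}}$ a.e.\ in $D$ and $\nabla u_\eps=0$ a.e.\ on $\{u_\eps=\eps\}$. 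Expanding the square and rearranging yields
\[
\int_{\{u_\eps\le\eps+t\varphi\}}|\nabla u_\eps|^2\,dx\le -2t\int_{\{u_\eps>\eps+t\varphi\}}\nabla u_\eps\cdot\nabla\varphi\,dx+t^2\int_D|\nabla\varphi|^2\,dx.
\]
Dividing by $t$ and sending $t\to 0^+$, the left-hand side stays non-negative, while dominated convergence (using once more that $\nabla u_\eps=0$ on the coincidence set $\{u_\eps=\eps\}$) sends the right-hand side to $-2\int_D\nabla u_\eps\cdot\nabla\varphi\,dx$. Therefore
\[
-\int_D\nabla u_\eps\cdot\nabla\varphi\,dx\ge 0\qquad\text{for every }\varphi\in C_c^\infty(D),\ \varphi\ge 0,
\]
which is exactly the subharmonicity of $u_\eps$ in $D$.

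The distribution $\Delta u_\eps$ is thus non-negative and belongs to $H^{-1}(D)$; by the Schwartz--Riesz representation of positive distributions it coincides with a locally finite, non-negative Radon measure $\mu_\eps$ on $D$. Since $\mu_\eps\in H^{-1}(D)$ it does not charge sets of zero capacity, so using the quasi-continuous representative of the test function and density of $C_c^\infty(D)$ in $H^1_0(D)$, the identity
\[
-\int_D\nabla u_\eps\cdot\nabla\varphi\,dx=\int_D\varphi\,d\mu_\eps
\]
extends from smooth compactly supported test functions to every $\varphi\in H^1_0(D)$ by linearity and approximation.

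The one delicate point I expect is the choice of admissible competitor: the one-sided constraint $u\ge\eps$ forbids the naive linear perturbation $u_\eps-t\varphi$, and one is forced to truncate to $(u_\eps-t\varphi)\vee\eps$. The apparent complication is the extra term $\int_{\{u_\eps\le\eps+t\varphi\}}|\nabla u_\eps|^2$ in the energy expansion, but its contribution is non-negative, so dividing by $t$ and taking the limit is harmless; the crucial ingredient is that $\nabla u_\eps=0$ on $\{u_\eps=\eps\}$, which controls the indicator in the linearized inequality.
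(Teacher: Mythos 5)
Your proof is correct and follows essentially the same route as the paper's: both exploit that the surface term $\int_{\partial^\ast\Omega_\eps}u^2\,d\HH^{d-1}$ is monotone in $u$, so any admissible downward perturbation (truncated at $\eps$ to respect the constraint) forces the Dirichlet energy not to decrease, which yields subharmonicity; the paper compares directly with a general $\varphi\vee\eps$ for $\varphi\le u_\eps$, while you linearize the specific competitor $(u_\eps-t\varphi)\vee\eps$, but the idea is identical. One cosmetic remark: $u_\eps$ need not equal $v$ near $\partial D$ (only $u_\eps-v\in H^1_0(D)$); the admissibility of $u_t$ really follows from $u_t-u_\eps$ being supported in $\mathrm{supp}\,\varphi\subset\subset D$.
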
	
\begin{oss}
$\mu_\eps$ is the distributional Laplacian of $u_\eps$. We will use the notation $\mu_\eps=\Delta u_\eps$.
\end{oss}
\begin{proof}
Let $\varphi\le u_\eps$ be a function in $H^1(D)$ such that $\varphi=u_\eps$ on $\partial D$. Then, testing the optimality of $(u_\eps,\Omega_\eps)$ with $(\varphi\vee\eps,\Omega_\eps)$ and using the fact that $u_\eps\ge \varphi\vee\eps$, we get 
\begin{align*}
\int_D|\nabla \varphi|^2\,dx&\ge \int_D|\nabla (\varphi\vee \eps)|^2\,dx\\
&\ge \int_D|\nabla u_\eps|^2\,dx +\int_{\partial^\ast\Omega_\eps}u_\eps^2\,d\HH^{d-1}-\int_{\partial^\ast\Omega_\eps}(\varphi\vee \eps)^2\,d\HH^{d-1}\ge  \int_D|\nabla u_\eps|^2\,dx,
\end{align*} 	
which concludes the proof.
\end{proof}

We will next show that the family of solutions $\big\{u_\eps\big\}_{\eps\in(0,m)}$ is uniformly H\"older continuous. 
We will use the following lemma, which can be proved in several different ways. Here, we give a short proof based on the mean-value formula for subharmonic functions. Similar argument was used to prove the Lipschitz continuity of the solutions to some free boundary problems (see for instance \cite{velectures} and the references therein).

\begin{lemma}[A general condition for the H\"older continuity]\label{l:holder}
Let $D$ be a bounded open set in $\R^d$ and let $h\in L^\infty_{loc}(D)$. 
Suppose that $u\in H^1(D)$ is such that 
\begin{enumerate}[\quad(a)]
\item $0\le u\le h$ in $D$;
\item $u$ is subharmonic in $D$; 
\item there are constants $K>0$ and $\alpha\in[0,1)$ such that 
\begin{equation}\label{e:opt_cond_balls}
\Delta u\big(B_r(x_0)\big)\le K\,r^{d-1-\alpha}\quad\text{for every}\quad x_0\in \overline D_\delta\quad\text{and every}\quad 0<r<\frac\delta2,
\end{equation}
where $\delta>0$ and 
\begin{equation}\label{e:Ddelta}
D_\delta:=\big\{x\in D\ :\ \text{\rm dist}(x,\partial D)> \delta\big\}.
\end{equation}
\end{enumerate}	 
Then, there is a constant $C$ depending on $\delta$, $h$, $\alpha$ and $K$ such that 
$$|u(x)-u(y)|\le C|x-y|^{\frac{1-\alpha}{2-\alpha}}\qquad\text{for every}\qquad x,y\in \overline D_\delta.$$
\end{lemma}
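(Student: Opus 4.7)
The plan is to exploit the mean value inequality for subharmonic functions together with the growth bound (c), and then optimize in a scale parameter. Throughout we work with the subharmonic representative $u(x) := \lim_{r\to 0}\bar u_x(r)$, where $\bar u_x(r) := \mean{B_r(x)} u$; the limit exists by monotonicity of the ball averages in $r$, and one has $u(x) \le \bar u_x(r)$ whenever $B_r(x)\subset D$.

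The first step is to establish the quantitative bound
$$0 \le \bar u_x(r) - u(x) \le C_1\, r^{1-\alpha}\qquad\text{for every } x \in \overline D_\delta,\ 0<r<\delta/2,$$
via the classical shell-integration identity
$$\bar u_x(r) - u(x) = \frac{1}{r^d\,|\partial B_1|}\int_0^r \frac{\mu(B_t(x))}{t^{d-1}}(r^d - t^d)\,dt,$$
obtained by integrating the formula $M'_x(t) = \mu(B_t(x))/(|\partial B_1|t^{d-1})$ for the spherical mean $M_x(t) := \mean{\partial B_t(x)} u$ and converting spherical to solid averages; the bound (c) then yields $C_1 = K/((1-\alpha)|\partial B_1|)$. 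Since $\mu = \Delta u$ is merely a Radon measure, one justifies the identity by mollification: $u_\eps := u*\phi_\eps$ is smooth and subharmonic with $\Delta u_\eps(B_t(x)) \le \mu(B_{t+\eps}(x)) \le K(t+\eps)^{d-1-\alpha}$, the identity is classical for $u_\eps$, and one passes to the limit $\eps\to 0$. Complementarily, using $0\le u\le H := \|h\|_{L^\infty(D_{\delta/2})}$ and $|B_r(x)\triangle B_r(y)| \le C_d\, r^{d-1}|x-y|$ (valid for $|x-y|\le r$) gives the Lipschitz-in-$x$ estimate
$$|\bar u_x(r) - \bar u_y(r)| \le \frac{H}{|B_r|}\,|B_r(x)\triangle B_r(y)| \le C_2\,\frac{|x-y|}{r}\quad\text{for } x,y\in\overline D_\delta,\ |x-y|\le r<\delta/2.$$

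To conclude, for $x,y\in\overline D_\delta$ with $\rho:=|x-y|$ small, the decomposition
$$u(x) - u(y) = \big(u(x)-\bar u_x(r)\big) + \big(\bar u_x(r)-\bar u_y(r)\big) + \big(\bar u_y(r)-u(y)\big)$$
together with the two estimates above yields $|u(x)-u(y)| \le 2C_1 r^{1-\alpha} + C_2\,\rho/r$. Choosing $r := \rho^{1/(2-\alpha)}$ balances the two terms and produces the desired bound $|u(x)-u(y)| \le C\rho^{(1-\alpha)/(2-\alpha)}$; the constraints $\rho\le r\le\delta/2$ only restrict $\rho$ to a small neighborhood of $0$, while the estimate for larger $\rho$ is trivial from the uniform bound $|u|\le H$. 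The only genuinely technical point is the justification of the shell-integration identity when $\Delta u$ is just a Radon measure, which is handled by the mollification above; everything else is bookkeeping.
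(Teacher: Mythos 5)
Your proof is correct and follows essentially the same route as the paper: a quantitative mean-value deviation bound $0\le \frac{1}{|B_r|}\int_{B_r(x)}u - u(x)\le C\,r^{1-\alpha}$ derived from the growth condition on $\Delta u$, a comparison of averages at nearby centers using $0\le u\le h$, and the choice $r=|x-y|^{1/(2-\alpha)}$. The only (cosmetic) difference is that you compare equal-radius balls via their symmetric difference, whereas the paper compares $B_R(x_0)$ with the enclosing ball $B_{R+|x_0-y_0|}(y_0)$ using the nonnegativity of $u$; both give the same $O(|x-y|/r)$ error term.
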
	
\begin{proof}
We first notice that the following formula is true for every subharmonic function $u\in H^1(D)$ and for every $x_0\in D$ and $0<s<t<\text{dist}(x_0,\partial D)$.
$$\mean{\partial B_t(x_0)}{u\,d\HH^{d-1}}-\mean{\partial B_s(x_0)}{u\,d\HH^{d-1}}=\frac{1}{d\omega_d}\int_s^t r^{1-d}\Delta u\big(B_r(x_0)\big)\,dr.$$
In particular, the function 
$$r\mapsto \mean{\partial B_r(x_0)}{u\,d\HH^{d-1}},$$
is monotone and we can define the function $u$ pointwise {\it everywhere} as
$$u(x_0):=\lim_{r\to0}\mean{\partial B_r(x_0)}{u\,d\HH^{d-1}}.$$
As a consequence, for every $R<\text{dist}(x_0,\partial D)$, we have
$$\mean{\partial B_R(x_0)}{u\,d\HH^{d-1}}-u(x_0)=\frac{1}{d\omega_d}\int_0^R r^{1-d}\Delta u\big(B_r(x_0)\big)\,dr.$$
Now, applying \eqref{e:opt_cond_balls}, and integrating in $r$, we get that if $x_0\in \overline D_\delta$ and $R<\frac\delta2$, then
\begin{equation}\label{e:mean_continuity1}
0\le \mean{\partial B_R(x_0)}{u\,d\HH^{d-1}}-u(x_0)\le C\,R^{1-\alpha}\qquad\text{where}\qquad C:=\frac{K}{d\omega_d(1-\alpha)},
\end{equation}
which, by the subharmonicity of $u$, implies
\begin{equation}\label{e:mean_continuity2}
0\le \mean{B_R(x_0)}{u\,dx}-u(x_0)\le C\,R^{1-\alpha}.
\end{equation}
Let now $x_0,y_0\in \overline D_\delta$ be such that 
$$|x_0-y_0|\le 1\qquad\text{and}\qquad R:=|x_0-y_0|^{\gamma}\le \frac\delta4,$$
where $\gamma\in(0,1)$ will be chosen later.

Now, since $B_{R}(x_0)\subset B_{R+|x_0-y_0|}(y_0)\subset B_{2R}(y_0)\subset D$, we can estimate 
\begin{align*}
u(x_0)-u(y_0)&\le  \mean{B_R(x_0)}{u(x)\,dx}-u(y_0)\le  \frac{\big(R+|x_0-y_0|\big)^d}{R^d}\mean{B_{R+|x_0-y_0|}(y_0)}{u(x)\,dx}-u(y_0)\\
&=  \left(1+|x_0-y_0|^{1-\gamma}\right)^d\mean{B_{R+|x_0-y_0|}(y_0)}{u(x)\,dx}-u(y_0)\\
&\le  \left(1+d2^{d-1}|x_0-y_0|^{1-\gamma}\right)\mean{B_{R+|x_0-y_0|}(y_0)}{u(x)\,dx}-u(y_0),
\end{align*}
where in the last inequality we used that $|x_0-y_0|^{1-\gamma}\le 1$. Now, using \eqref{e:mean_continuity2}, we get 
\begin{align*}
u(x_0)-u(y_0)
&\le  \Big(\mean{B_{R+|x_0-y_0|}(y_0)}{u(x)\,dx}-u(y_0)\Big)+d2^{d-1}|x_0-y_0|^{1-\gamma}\|u\|_{L^\infty(B_{2R}(y_0))}\\
&\le  C\big(R+|x_0-y_0|\big)^{1-\alpha}+d2^{d-1}|x_0-y_0|^{1-\gamma}\|h\|_{L^\infty(B_{2R}(y_0))}\\
&\le  \left(2C+d2^{d-1}M_{\sfrac\delta2}\right)|x_0-y_0|^{1-\gamma},
\end{align*}
where $M_{\sfrac\delta2}$ is the maximum of $h$ on the set $\overline D_{\sfrac\delta2}$ and where we choose $\ds\gamma=\frac1{2-\alpha}$, which implies that 
$\gamma(1-\alpha)=1-\gamma$ and $\ds1-\gamma=\frac{1-\alpha}{2-\alpha}$.
\end{proof}

\begin{prop}[H\"older continuity of the solution]\label{p:holder}
Let $m>0$, $\beta>0$ and $\eps\in[0,m)$ be fixed. Let the function $u_\eps\in H^1(D)$ and the set of finite perimeter $\Omega_\eps$ be such that the couple $(u_\eps,\Omega_\eps)$ is a solution to the problem \eqref{e:pb_aux} with some $v\in H^1(D)$ and $E\subset \R^d$. Then, for every $\delta>0$, there is a constant $C$ depending on $D$, $\delta$ and $v$ (but not on $\eps$) such that 
$$|u_\eps(x)-u_\eps(y)|\le C|x-y|^{\sfrac13}\qquad\text{for every}\qquad x,y\in D_\delta.$$
\end{prop}	
\begin{proof}
By Lemma \ref{l:subharmonicity}, we have that $u_\eps$ is subharmonic and, in particular, $0\le u_\eps\le h$ in $D$, where $h$ is the harmonic extension of $v$ in $D$. Thus, it is sufficient to prove that \eqref{e:opt_cond_balls} holds. Let $x_0\in \overline D_\delta$ and $R\le \frac\delta2$. Let $\varphi\in C^\infty_c(B_{\sfrac{3R}2}(x_0))$ be such that 
$$\varphi=1\quad\text{on}\quad B_R(x_0),\qquad |\nabla \varphi|\le \frac{3}{R}\quad\text{in}\quad B_{\sfrac{3R}2}(x_0).$$
Now, we test the optimality of $(u_\eps,\Omega_\eps)$ with $(\widetilde u_\eps,\widetilde \Omega_\eps)$, where 
$$\widetilde u_\eps=u_\eps+R^{\sfrac12}\varphi\qquad\text{and}\qquad \widetilde \Omega_\eps=\Omega_\eps\cup B_{\sfrac{3R}2}(x_0).$$
Thus, we get 
\begin{align*}
\int_{D}|\nabla u_\eps|^2\,dx+\beta\int_{\partial^\ast\Omega_\eps}u_\eps^2\,d\HH^{d-1}&\le \int_{D}|\nabla \widetilde u_\eps|^2\,dx+\beta\int_{\partial^\ast\widetilde \Omega_\eps}\widetilde u_\eps^2\,d\HH^{d-1}
\end{align*}
and
\begin{align*}
\int_{\partial^\ast\widetilde \Omega_\eps}\widetilde u_\eps^2\,d\HH^{d-1}&\le \int_{\partial^\ast \Omega_\eps\setminus B_{\sfrac{3R}2}(x_0)} u_\eps^2\,d\HH^{d-1}+\int_{\partial B_{\sfrac{3R}2}(x_0)} u_\eps^2\,d\HH^{d-1}\\
&\le \int_{\partial^\ast\Omega_\eps}u_\eps^2\,d\HH^{d-1}+C_d R^{\,d-1}M_{\sfrac{\delta}4}^2,
\end{align*}
where $M_\rho:=\sup\big\{h(x)\,:\,x\in \overline D_\rho\big\}.$
Thus, we obtain 
\begin{align*}
2R^{\sfrac12}\int_{B_{\sfrac{3R}2}(x_0)}\!\!\!-\nabla u_\eps\cdot\nabla \varphi\,dx&\le R\int_{B_{\sfrac{3R}2}(x_0)}|\nabla \varphi|^2\,dx+\beta C_dR^{\,d-1}M_{\sfrac{\delta}4}^2\le C_d\big(1+\beta M_{\sfrac{\delta}4}^2\big)R^{\,d-1}\ ,
\end{align*}
which implies that 
$$\Delta u_\eps\big(B_R(x_0)\big)\le C_d\big(1+\beta M_{\sfrac{\delta}4}^2\big)R^{\,d-\sfrac32},$$
which concludes the proof of \eqref{e:opt_cond_balls} with $\alpha=\sfrac12$\,.
\end{proof}	

\section{Existence of an optimal set}\label{s:existence}

\subsection{Definition of $(u_0,\Omega_0)$}\label{sub:construction}
Now, for any $\eps\in(0,m)$, we consider the solution $(u_\eps,\Omega_\eps)$ of \eqref{e:pb_aux}. As a consequence of Proposition \eqref{p:holder}, we can find a sequence $\eps_n\to0$ and a function $u_0\in H^1(D)\cap C^{0,\sfrac13}(D)$ such that :
\begin{itemize}
\item $u_{\eps_n}$ converges to $u_0$ uniformly on every set $D_\delta$, $\delta>0$, where $D_\delta$ is defined in \eqref{e:Ddelta};
\item $u_{\eps_n}$ converges to $u_0$ strongly in $L^2(D)$; 
\item $u_{\eps_n}$ converges to $u_0$ weakly in $H^1(D)$.
\end{itemize}	
Our aim in this section is to show that $u_0$ is a solution to \eqref{e:pb}. 


The construction of $\Omega_0$ is more delicate. First, we fix $t>0$ and $\delta>0$ and we notice that the perimeter of $\Omega_{\eps_n}$ is bounded on the open set $\{u_0>t\}\cap D_\delta$. Indeed, the uniform convergence of $u_{\eps_n}$ to $u_0$ implies that, for $n$ large enough ($n\ge N_{t,\delta}$, for some fixed $N_{t,\delta}\in\N$), 
$$u_{\eps_n}\ge \frac{t}{2}\quad\text{on}\quad D_\delta\cap \{u_0>t\}.$$
Thus, we have 
$$J_\beta(v,E)\ge \beta \int_{D_\delta\cap \{u_0>t\}\cap\partial^\ast\Omega_{\eps_n}}u_{\eps_n}^2\,d\HH^{d-1}\ge \frac{\beta t^2}2\,{\rm Per}\,\big(\Omega_{\eps_n};D_\delta\cap \{u_0>t\}\big).$$
Now, if we choose $t$ such that $Per(\{u_0>t\})<\infty$ (which, by the co-area formula, is true for almost-every $t>0$), then we have that 
$$\text{Per}\big(\Omega_{\eps_n}\cap \{u_0>t\}\cap D_{\delta}\big)\le C_{t,\delta}\quad\text{for every}\quad n\ge N_{t,\delta},$$
for some constant $C_{t,\delta}>0$. Now, since all the sets $\Omega_{\eps_n}\cap \{u_0>t\}\cap D_{\delta}$ are contained in $D$ and have uniformly bounded perimeter, we can find a set $\Omega_0$ and a subsequence for which 
$$\ind_{\Omega_{\eps_n}\cap \{u_0>t\}\cap D_{\delta}}(x)\to \ind_{\Omega_{0}\cap \{u_0>t\}\cap D_{\delta}}(x)\quad\text{for almost-every}\quad x\in D.$$
Thus, by a diagonal sequence argument,  we can extract a subsequence of $\eps_n$ (still denoted by $\eps_n$) and we can define the set $\Omega_0\subset\R^d$ as the pointwise limit  
$$\ind_{\Omega_0}(x)=\lim_{n\to\infty}\ind_{\Omega_{\eps_n}\cap\{u_0>0\}}(x)\quad\text{for almost-every}\quad x\in \{u_0>0\},$$
and we notice that, by construction, $\Omega_0\subset\{u_0>0\}$.
Notice that, we do not know a priori that $\Omega_0$ has finite perimeter. We only know that 
$$\text{\rm Per}\,(\Omega_0\cap\{u_0>t\}\cap D_\delta)<\infty\quad\text{for every}\quad\delta>0\quad\text{and almost-every}\quad t>0.$$
which means that $\Omega_0\cap\{u_0>t\}$ has locally finite perimeter in $D$ for almost-every $t>0$.
\subsection{An optimality condition}
As pointed out above, we do not know if the couple $(u_0,\Omega_0)$ is even an admissible competitor for \eqref{e:pb} (we need to show that $\Omega_0\in \mathcal E$). Nevertheless, we can still prove that it satisfies a suitable optimality condition.
\begin{lemma}[The optimality condition at the limit]\label{l:opt_cond}
Let $u_0$ and $\Omega_0$ be as in Section \ref{sub:construction}. Then, for almost-every $t>0$, we have 
\begin{equation}\label{e:opt_cond}
\int_{\{u_0<t\}}|\nabla u_0|^2\,dx\le \beta\, t^2\, \text{\rm Per}\big(\{u_0<t\}\big).
\end{equation}
\end{lemma}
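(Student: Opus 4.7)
The strategy is to pass to the limit in the minimality of $(u_{\eps_n},\Omega_{\eps_n})$, using a competitor that incorporates the level set $\{u_0<t\}$ of the limit itself (rather than $\{u_{\eps_n}<t\}$), so that the desired perimeter $\mathrm{Per}(\{u_0<t\})$ appears directly on the right-hand side, in place of $\mathrm{Per}(\{u_{\eps_n}<t\})$, for which we would only have (the wrong-direction) lower semicontinuity in $n$.

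Fix $t\in(0,m)$ satisfying $|\{u_0=t\}|=0$ and $\mathrm{Per}(\{u_0<t\})<+\infty$, both true for a.e.\ such $t$ (via the co-area formula applied to $u_0\in H^1(D)$). For $n$ large enough that $\eps_n<t$, I would test the minimality of $(u_{\eps_n},\Omega_{\eps_n})$ in \eqref{e:pb_aux} against
\[
(\tilde u_n,\tilde\Omega_n):=\bigl(u_{\eps_n}\vee t,\;\Omega_{\eps_n}\cup\{u_0<t\}\bigr).
\]
Admissibility is immediate: $\tilde u_n\ge t>\eps_n$; since $v\ge m>t$ on $\partial D$, $\tilde u_n$ has trace $v$ there; and $\{u_0<t\}\subset D$ (using $u_0=v\ge m>t$ on $\R^d\setminus D$) has finite perimeter, so $\tilde\Omega_n=E$ outside $D$ and lies in $\mathcal E$. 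The gradient term of $J_\beta$ changes by exactly $-\int_{\{u_{\eps_n}<t\}}|\nabla u_{\eps_n}|^2\,dx$.

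The delicate step is controlling the change in the boundary term. Using $\partial^\ast\tilde\Omega_n\subset(\partial^\ast\Omega_{\eps_n}\cap\{u_0\ge t\})\cup\partial^\ast\{u_0<t\}$ (up to $\HH^{d-1}$-null sets), I would exploit two estimates valid on a compact $K\Subset D$. First, on $\partial^\ast\{u_0<t\}\cap K$ the approximate value of $u_0$ equals $t$, so the uniform convergence $u_{\eps_n}\to u_0$ on $K$ yields $(u_{\eps_n}\vee t)^2\le (t+\eta_n)^2$, where $\eta_n:=\|u_{\eps_n}-u_0\|_{L^\infty(K)}\to0$. Second, on $\partial^\ast\Omega_{\eps_n}\cap\{u_0\ge t\}\cap K$ the inequality $u_{\eps_n}\ge t-\eta_n\ge t/2$ combined with the a priori estimate $\beta\int_{\partial^\ast\Omega_{\eps_n}}u_{\eps_n}^2\,d\HH^{d-1}\le J_\beta(v,E)$ gives the \emph{uniform} perimeter bound $\mathrm{Per}(\Omega_{\eps_n}\cap\{u_0\ge t\}\cap K)\le 4J_\beta(v,E)/(\beta t^2)$, which absorbs the pointwise error $(u_{\eps_n}\vee t)^2-u_{\eps_n}^2=O(\eta_n)$. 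A tail argument, using that $\mathrm{Per}(\{u_0<t\};D\setminus D_\delta)\to0$ as $\delta\to0$ (the trace of $u_0$ on $\partial D$ being $\ge m>t$), handles the portion outside $K$. Combining these, minimality yields
\[
\int_{\{u_{\eps_n}<t\}}|\nabla u_{\eps_n}|^2\,dx\le\beta t^2\,\mathrm{Per}(\{u_0<t\})+o_n(1).
\]

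Finally, to pass to the limit: since $|\{u_0=t\}|=0$, $\ind_{\{u_{\eps_n}<t\}}\to\ind_{\{u_0<t\}}$ pointwise a.e.\ in $D$, and combined with the weak convergence $\nabla u_{\eps_n}\rightharpoonup\nabla u_0$ in $L^2(D;\R^d)$ the product converges weakly in $L^2$ to $\ind_{\{u_0<t\}}\nabla u_0$. Lower semicontinuity of the $L^2$ norm gives $\int_{\{u_0<t\}}|\nabla u_0|^2\,dx\le\liminf_n\int_{\{u_{\eps_n}<t\}}|\nabla u_{\eps_n}|^2\,dx$, and \eqref{e:opt_cond} follows. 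The main obstacle is precisely the boundary bookkeeping in the previous step: although $\mathrm{Per}(\Omega_{\eps_n})$ blows up as $\eps_n\to 0$, the error must be carefully localized to the region $\{u_0\ge t\}\cap K$, where the boundary part of the energy provides a uniform perimeter bound, with a separate tail argument taking care of the vicinity of $\partial D$.
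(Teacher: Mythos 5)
Your overall scheme (test the minimality of $(u_{\eps_n},\Omega_{\eps_n})$ with a truncation at level $t$ and with the set enlarged by the sublevel set $\{u_0<t\}$ of the limit) is the same as the paper's, but your competitor $(u_{\eps_n}\vee t,\,\Omega_{\eps_n}\cup\{u_0<t\})$ differs from the paper's fixed competitor $(u_0\vee t,\,\Omega_0\cup\{u_0<t\})$, and this is where a genuine gap appears. The change in the surface energy is not only the two pieces you estimate (the new boundary $\partial^\ast\{u_0<t\}$, and the surviving part of $\partial^\ast\Omega_{\eps_n}$ inside $K$): you must also account for the surviving part of $\partial^\ast\Omega_{\eps_n}$ in $D\setminus K$, on which $(u_{\eps_n}\vee t)^2-u_{\eps_n}^2$ can be as large as $t^2$ wherever $u_{\eps_n}<t$. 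Your tail argument only controls $\mathrm{Per}(\{u_0<t\};D\setminus D_\delta)$, i.e.\ the \emph{new} boundary near $\partial D$; it says nothing about
$$\HH^{d-1}\big(\partial^\ast\Omega_{\eps_n}\cap(D\setminus K)\cap\{u_{\eps_n}<t\}\big),$$
for which the only available bound is $\mathrm{Per}(\Omega_{\eps_n})\le J_\beta(v,E)/(\beta\eps_n^2)$, which diverges. The H\"older estimate of Proposition \ref{p:holder} is interior, so near $\partial D$ there is neither uniform convergence $u_{\eps_n}\to u_0$ (to force $u_{\eps_n}\ge t-\eta_n$ there) nor a uniform perimeter bound (the a priori estimate $\beta\int_{\partial^\ast\Omega_{\eps_n}}u_{\eps_n}^2\le J_\beta(v,E)$ only controls the perimeter where $u_{\eps_n}$ is bounded away from $0$, which is exactly what fails on $\{u_{\eps_n}<t\}$). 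A priori, $\Omega_{\eps_n}$ could oscillate wildly in a boundary layer where $u_{\eps_n}\approx\eps_n$ at negligible cost $\beta\eps_n^2\,\mathrm{Per}$, and raising $u_{\eps_n}$ to $t$ there would make your error term blow up. The two limits $n\to\infty$ and $\delta\to 0$ do not commute in your favor, so the $o_n(1)$ in your key inequality is not justified.

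The paper sidesteps exactly this by building the competitor entirely from the limit objects: the right-hand side $J_\beta(u_0\vee t,\Omega_0\cup\{u_0<t\})$ is then a fixed quantity bounded by $\int_D|\nabla(u_0\vee t)|^2+\beta t^2\mathrm{Per}(\{u_0<t\})+\beta\int_{\{u_0>t\}\cap\partial^\ast\Omega_0}u_0^2$, while on the left-hand side one simply \emph{discards} the nonnegative contribution of $\partial^\ast\Omega_{\eps_n}$ over $\{u_0\le t\}$ (so the boundary layer never appears as an error that must vanish) and passes to the $\liminf$ using weak $H^1$ lower semicontinuity together with Lemma \ref{l:semicontinuity} on the open sets $U_{t,\delta}=\R^d\setminus(\overline{D_\delta}\cap\{u_0\le t\})$, followed by monotone convergence in $\delta$; the common term $\beta\int_{\{u_0>t\}\cap\partial^\ast\Omega_0}u_0^2$ then cancels. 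To repair your argument you would essentially have to reintroduce this semicontinuity step, i.e.\ revert to the paper's competitor; your final limit passage for the Dirichlet term (pointwise convergence of $\ind_{\{u_{\eps_n}<t\}}$ plus weak $L^2$ convergence of the gradients) is correct but is then no longer needed in that form.
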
	
\begin{proof}
Let now $t>0$ be fixed and such that the set $\{u_0<t\}$ has finite perimeter. 	Then, for $n$ large enough, we can use the couple 
$(u_0\vee t, \Omega_0\cup\{u_0< t\})$ to test the optimality of $(u_{\eps_n},\Omega_{\eps_n})$.
Notice that the set $\Omega_0\cup\{u_0< t\}$ has finite perimeter for a.e $t\in (0,m)$, as observed in the previous section.
 For the sake of simplicity, we write $u_{\eps_n}=u_n$, $\Omega_{\eps_n}=\Omega_n$, $u_0=u$ and $\Omega_0=\Omega$. Thus, we have 
\begin{align}
\int_{D}|\nabla u_{n}|^2\,dx+&\beta\int_{\{u> t\}\cap \partial^\ast\Omega_{n}}u_{n}^2\,d\HH^{d-1}\notag\\
&\le \int_{D}|\nabla u_{n}|^2\,dx+\beta\int_{\partial^\ast\Omega_{n}}u_{n}^2\,d\HH^{d-1}\notag\\
&\le \int_{D}|\nabla (u\vee t)|^2\,dx+\beta\int_{\partial^\ast(\Omega\cup\{u<t\})}u^2\,d\HH^{d-1}\notag\\
&\le \int_{D}|\nabla (u\vee t)|^2\,dx+\beta t^2\, \text{\rm Per}(\{u<t\})+\beta\int_{\{u> t\}\cap \partial^\ast\Omega}u^2\,d\HH^{d-1}.\label{e:att}
\end{align}
Now, by the weak convergence of $u_n$ to $u$, we get that 
$$\int_{D}|\nabla u|^2\,dx\le \liminf_{n\to\infty}\int_{D}|\nabla u_{n}|^2\,dx\,.$$
On the other hand, setting $U_{t,\delta}$ to be the open set
$$U_{t,\delta}=\R^d\setminus \big(\overline D_\delta\cap\{u\le t\}\big),$$
for some fixed $\delta>0$, and applying Lemma \ref{l:semicontinuity}, we have that
\begin{align*}
 \int_{U_{t,\delta}\cap \partial^\ast\Omega}u^2\,d\HH^{d-1}&\le  \liminf_{n\to\infty}\int_{U_{t,\delta}\cap \partial^\ast\Omega_{n}}u_{n}^2\,d\HH^{d-1}\le  \liminf_{n\to\infty}\int_{\{u>t\}\cap \partial^\ast\Omega_{n}}u_{n}^2\,d\HH^{d-1}.
\end{align*}
Taking the limit as $\delta\to0$, by the monotone convergence theorem, we get that
$$\lim_{\delta\to0}\int_{U_{t,\delta}\cap \partial^\ast\Omega}u^2\,d\HH^{d-1}=\int_{\big(\R^d\setminus (D\cap\{u\le t\})\big)\cap \partial^\ast\Omega}u^2\,d\HH^{d-1}$$
Now since 
$$u(x)=h(x)\quad\text{for quasi-every}\quad x\in \R^d\setminus D\quad\text{and for}\quad \HH^{d-1}\text{-almost-every}\quad x\in \R^d\setminus D,$$ 
and since $h\ge m>t$ on $\partial D$, we have that 
\begin{equation}\label{e:one_more_light2}
\int_{\big(\R^d\setminus (D\cap\{u\le t\})\big)\cap \partial^\ast\Omega}u^2\,d\HH^{d-1}=\int_{\{u>t\}\cap \partial^\ast\Omega}u^2\,d\HH^{d-1}.
\end{equation}
Thus, we get that
\begin{equation}\label{e:one_more_light}
\int_{\{u>t\}\cap \partial^\ast\Omega}u^2\,d\HH^{d-1}\le \liminf_{n\to\infty}\int_{D\cap\{u>t\}\cap \partial^\ast\Omega_{n}}u_{n}^2\,d\HH^{d-1}.
\end{equation}
Now, using \eqref{e:one_more_light} and \eqref{e:att}, we obtain
\begin{align}
\int_{D}|\nabla u|^2\,dx+&\beta\int_{\{u> t\}\cap \partial^\ast\Omega}u^2\,d\HH^{d-1}\notag\\
&\le \liminf_{n\to\infty}\int_{D}|\nabla u_{n}|^2\,dx+\beta\int_{\{u> t\}\cap \partial^\ast\Omega_{n}}u_{n}^2\,d\HH^{d-1}\notag\\
&\le \int_{D}|\nabla (u\vee t)|^2\,dx+\beta t^2\, \text{\rm Per}(\{u<t\})+\beta\int_{\{u> t\}\cap \partial^\ast\Omega}u^2\,d\HH^{d-1},\notag
\end{align}
which gives \eqref{e:opt_cond}.
\end{proof}	

\subsection{Non-degeneracy} The crucial observation in this section is that the functions $u$ satisfying the optimality condition \eqref{e:opt_cond} are non-degenerate in the sense of the following proposition.
\begin{prop}[Non-degeneracy]\label{p:non-degeneracy}
Let $\beta>0$, $m>0$, $D$ be a bounded open set of $\R^d$ and $u\in H^1(D)$ be a non-negative function in $D$ such that $u\ge m$ on $\partial D$. Let $\Omega\subset D$ be a set of finite perimeter in $D$. Suppose that $u$ and $\Omega$ satisfy the optimality condition 
\begin{equation}\label{e:opt_cond_nd}
\int_{\Omega_t}|\nabla u|^2\,dx\le \beta\, t^2\, \text{\rm Per}(\Omega_t)\qquad\text{where}\qquad\Omega_t=\{u\le t\},
\end{equation}
for almost-every $t\in(0,m)$. Then, $|\Omega_t|=0$ for some $t>0$.
\end{prop}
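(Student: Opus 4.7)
The plan is to argue by contradiction: assume $\mu(t) := |\Omega_t| > 0$ for every $t \in (0, m)$ and derive an impossibility. The proof proceeds through three successive refinements, mirroring the strategy laid out in Section \ref{sub:sketch}.

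The first step is to derive a starting estimate. Since $u \ge m > t$ on $\partial D$ for every $t < m$, the truncation $\phi_t := (t-u)_+$ belongs to $H^1_0(D)$, with $|\nabla \phi_t|^2 = |\nabla u|^2 \ind_{\Omega_t}$. Applying the Sobolev--Poincar\'e inequality to $\phi_t$ and inserting the optimality hypothesis \eqref{e:opt_cond_nd} gives an upper bound on
\begin{equation*}
V(t) \;:=\; \int_D (t-u)_+\,dx \;=\; \int_0^t \mu(s)\,ds
\end{equation*}
of the form $V(t) \le C\sqrt{\beta}\,t\,\mu(t)^{\gamma}\,\mathrm{Per}(\Omega_t)^{1/2}$ for some exponent $\gamma(d) \in [\tfrac12,1)$. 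Following the differential-inequality scheme of Caffarelli--Kriventsov \cite[Theorem 3.2]{ck} --- but without the $\bar C\,|\Omega \cap \{u \le t\}|$ term that closes their argument in the one-phase setting --- this bound yields only the weak non-degeneracy $V(t) \ge \varepsilon\,t$ for every $t \in (0,m)$ and some $\varepsilon > 0$. Combining the two inequalities produces the starting estimate
\begin{equation*}
c \;\le\; \sqrt{\beta}\;\mathrm{Per}(\Omega_t)^{1/2}\; \mu(t)^{1/2} \qquad \text{for every } t \in (0,m).
\end{equation*}

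The second step is an iteration. From the starting estimate I would derive, by induction on $n$ and via an interpolation/Young-type step, the family of bounds
\begin{equation*}
c \;\le\; \beta^{\,1-2^{-n}}\; \mathrm{Per}(\Omega_t)^{\,2^{-n}}\; \mu(t)^{\,1-2^{-n}} \qquad \text{for every } t \in (0,m),\ n \in \N.
\end{equation*}
The coarea formula applied to $u \in H^1(D)$ gives $\mathrm{Per}(\Omega_t) < +\infty$ for a.e.\ $t$, so passing to the limit $n \to \infty$ yields $c \le \beta\, \mu(t)$ for a.e.\ $t$, and hence the positivity of the zero-set:
\begin{equation*}
|\{u = 0\}| \;=\; \lim_{t \to 0^+} \mu(t) \;\ge\; c/\beta \;>\; 0.
\end{equation*}

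The final step converts the positive measure of the zero-set into a contradiction. The key observation is that, once $|\{u=0\}| \ge c/\beta$, one has room to improve the base estimate using test functions localized near the zero-set. Concretely, the Sobolev--Poincar\'e step of Step~1 can be sharpened using the positive measure of $\{u=0\}$, so that the starting estimate is upgraded to $c \le \sqrt{\varepsilon}\,\mathrm{Per}(\Omega_t)^{1/2}\,\mu(t)^{1/2}$ for an arbitrarily small $\varepsilon > 0$. Re-running the iteration of Step~2 with $\varepsilon$ in place of $\beta$ then gives $c \le \varepsilon\,|D|$ for every $\varepsilon > 0$, which is absurd. This last refinement --- pinning down exactly how the positive measure of $\{u=0\}$ allows an arbitrarily small constant to replace $\beta$ --- is the step I expect to require the most care.
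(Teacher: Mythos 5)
Your overall architecture coincides with the paper's (argue by contradiction; establish the base estimate $c\le\beta^{\sfrac12}\mathrm{Per}(\Omega_t)^{\sfrac12}|\Omega_t|^{\sfrac12}$; iterate by interpolation to $c\le\beta\,|\Omega_t|$; deduce $|\{u=0\}|>0$; rerun everything with an arbitrarily small $\eps$ in place of $\beta$). However, the step you yourself flag as ``requiring the most care'' is precisely the one whose mechanism you have not identified, and it is the heart of the proof. The correct observation is not a sharpening of Sobolev--Poincar\'e near the zero set: it is simply that, since $u\ge0$, $\nabla u=0$ almost everywhere on $\Omega_0=\{u=0\}$, so the Cauchy--Schwarz step in \eqref{e:main_inequality} improves to
$$\int_{\Omega_t}|\nabla u|\,dx=\int_{\Omega_t\setminus\Omega_0}|\nabla u|\,dx\le\Big(\int_{\Omega_t}|\nabla u|^2\,dx\Big)^{\sfrac12}|\Omega_t\setminus\Omega_0|^{\sfrac12}\le t\,\beta^{\sfrac12}\,\mathrm{Per}(\Omega_t)^{\sfrac12}\,|\Omega_t\setminus\Omega_0|^{\sfrac12}.$$
Since $|\Omega_t\setminus\Omega_0|\to0$ as $t\to0$ while $|\Omega_t|\ge|\Omega_0|>0$, for every $\eps>0$ there is $T_\eps$ with $\beta\,|\Omega_t\setminus\Omega_0|\le\eps\,|\Omega_t|$ on $(0,T_\eps)$; Steps 1--2 then run verbatim with $\eps$ in place of $\beta$, and \eqref{e:measure_non_degeneracy} becomes $C_d\,\eps^{-d}\le|\Omega_t|\le|D|$, which fails for $\eps$ small. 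Without this point the argument does not close, so as written the proposal has a genuine gap.

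There is also a problem in your Step 1. You set $V(t)=\int_0^t|\Omega_s|\,ds$, so $V'(t)=|\Omega_t|$, but the right-hand side of your bound, $C\sqrt{\beta}\,t\,\mu(t)^{\gamma}\,\mathrm{Per}(\Omega_t)^{\sfrac12}$, involves $\mathrm{Per}(\Omega_t)$, which is not controlled by $V'$: the isoperimetric inequality bounds $\mathrm{Per}(\Omega_t)$ from \emph{below} by a power of $|\Omega_t|$, not from above, so the differential inequality for $V$ does not close and ``$V(t)\ge\eps t$'' does not follow from the scheme you describe. The paper's choice is $f(t)=\int_0^t\mathrm{Per}(\Omega_s)\,ds=\int_{\Omega_t}|\nabla u|\,dx$ (by coarea), for which $f'(t)=\mathrm{Per}(\Omega_t)$; combining Cauchy--Schwarz, the hypothesis \eqref{e:opt_cond_nd} and the isoperimetric inequality yields $f(t)\le t\,\beta^{\sfrac12}C_d\,f'(t)^{\frac{2d-1}{2d-2}}$, which integrates to $f(t)\ge Ct$ and hence to the base estimate \eqref{e:induction0}. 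Your Step 2 iteration is correct and is exactly the paper's; once Steps 1 and 3 are repaired as above, the argument is complete.
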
	
\begin{proof}
By contradiction, suppose that 
$$|\Omega_t|>0\quad\text{for every}\quad t>0.$$
Let $t\in(0,m)$ be fixed. By the co-area formula, the Cauchy-Schwartz inequality and the optimality condition \eqref{e:opt_cond_nd}, we get  
\begin{equation}\label{e:main_inequality}
\int_{\Omega_t}|\nabla u|=\int_0^t\text{\rm Per}(\Omega_s)\,ds\le \left(\int_{\Omega_t}|\nabla u|^2\right)^{\sfrac12}|\Omega_t|^{\sfrac12}\le t\beta^{\sfrac12}\text{\rm Per}(\Omega_t)^{\sfrac12}|\Omega_t|^{\sfrac12}.
\end{equation}
We now set 
$$f(t):=\int_{0}^t \text{\rm Per}(\Omega_s)\,ds=\int_{\Omega_t}|\nabla u|\,dx\ .$$
Using \eqref{e:main_inequality}, we will estimate $f(t)$ from below. 
\medskip

\noindent{\bf Step 1. \it Non-degeneracy of $f$.}
By the isoperimetric inequality and the estimate \eqref{e:main_inequality}, there is a dimensional constant $C_d$ such that
$$\int_{0}^t \text{\rm Per}(\Omega_s)\,ds\le t\beta^{\sfrac12}C_d\,\text{\rm Per}(\Omega_t)^{\frac{2d-1}{2d-2}}\ .$$
Using the definition of $f$, we can re-write this inequality as
$$f(t)^{\frac{2d-2}{2d-1}}\le t^{\frac{2d-2}{2d-1}}\big(\beta^{\sfrac12}C_d\big)^{\frac{2d-2}{2d-1}}f'(t)\ .$$
After rearranging the terms and integrating from $0$ to $t$, we obtain 
$$f(t)^{\frac1{2d-1}}-f(0)^{\frac1{2d-1}}\ge \frac{t^{\frac1{2d-1}}}{\big(\beta^{\sfrac12}C_d\big)^{\frac{2d-2}{2d-1}}}\ .$$
Now, since $u$ is non-negative in $D$, we have that $f(0)=0$. Thus 
$$f(t)\ge \frac{t}{\big(\beta^{\sfrac12}C_d\big)^{{2d-2}}}\ .$$ 
Setting 
\begin{equation}\label{e:def_C}
C=\big(\beta C_d\big)^{1-d},
\end{equation}
we obtain the lower bound
$$f(t)\ge Ct.$$
In particular, as a consequence of \eqref{e:main_inequality}, we get that 
\begin{equation}\label{e:induction0}
C\le \beta^{\sfrac12} \text{\rm Per}(\Omega_t)^{\sfrac12}|\Omega_t|^{\sfrac12}.
\end{equation}
\noindent{\bf Step 2. \it Non-degeneracy of $|\Omega_t|$.}
Let $\alpha\in(0,1)$ be fixed. Then, we have that 
\begin{align*}
\int_0^t \text{\rm Per}(\Omega_s)^\alpha\,|\Omega_s|^{1-\alpha} ds&\le \left(\int_0^t \text{\rm Per}(\Omega_s)\,ds\right)^{\alpha}\left(\int_0^t|\Omega_s|\,ds\right)^{1-\alpha}\\
&\le \left(t\beta^{\sfrac12} \text{\rm Per}(\Omega_t)^{\sfrac12}|\Omega_t|^{\sfrac12}\right)^{\alpha}\Big(t|\Omega_t|\Big)^{1-\alpha}=t\beta^{\sfrac{\alpha}{2}} \text{\rm Per}(\Omega_t)^{\sfrac\alpha2}|\Omega_t|^{1-\sfrac\alpha2}.
\end{align*}
Thus, we obtain that for fixed $T\in(0,m)$ and $C>0$, the following implication holds :

\begin{equation}\label{e:claim_non_deg}
\begin{array}{ll}
\text{If $\quad C\le \text{\rm Per}(\Omega_t)^\alpha|\Omega_t|^{1-\alpha}\quad $for every$\quad t\in(0,T)$,}\medskip\\
\qquad\qquad\qquad \text{then$\quad C\le \beta^{\sfrac\alpha2} \text{\rm Per}(\Omega_t)^{\sfrac\alpha2}|\Omega_t|^{1-\sfrac\alpha2}\quad $for every$\quad t\in(0,T)$.}
\end{array}
\end{equation}
%
%
%
%

\noindent \rm We claim that, for every $n\ge 1$ and every $t\in(0,m)$, we have the inequality 
\begin{equation}\label{e:induction}
C\le \beta^{1-\sfrac{1}{2^{n}}}\text{\rm Per}(\Omega_t)^{\sfrac{1}{2^{n}}}|\Omega_t|^{1-\sfrac{1}{2^{n}}}.
\end{equation}
In order to prove \eqref{e:induction}, we argue by induction on $n$. When $n=1$, \eqref{e:induction} is precisely \eqref{e:induction0}. In order to prove that the claim \eqref{e:induction} for $n\in \N$ implies the same claim for $n+1$, we apply \eqref{e:claim_non_deg} for $\alpha=2^{-n}$, $n\in\N$, which gives precisely \eqref{e:induction} with $n+1$. This concludes the proof of \eqref{e:induction}. Next, passing to the limit as $n\to\infty$, we obtain that
$$C\le \beta |\Omega_t|\quad\text{for every}\quad t\in(0,T),$$
where $C$ is given by \eqref{e:def_C}. Thus, there is a dimensional constant $C_d>0$ such that 
\begin{equation}\label{e:measure_non_degeneracy}
\beta^{-d} C_d\le |\Omega_t|\qquad\text{for every}\qquad t\in[0,m).
\end{equation}
\noindent{\bf Step 3. \it Conclusion.} We now notice that 
$$\lim_{t\to0}|\Omega_t|=|\Omega_0|>0.$$
Thus, for every $\eps>0$, there is $T_\eps$ such that for all $t\in(0,T_\eps)$ we have  
\begin{equation}
\int_{\Omega_t}|\nabla u|=\int_0^t\text{\rm Per}(\Omega_s)\,ds\le \left(\int_{\Omega_t}|\nabla u|^2\right)^{\sfrac12}|\Omega_t\setminus\Omega_0|^{\sfrac12}\le t\eps ^{\sfrac12}\text{\rm Per}(\Omega_t)^{\sfrac12}|\Omega_t|^{\sfrac12}.
\end{equation}
Now, repeating the argument fro Step 1 and Step 2, we get that \eqref{e:measure_non_degeneracy} should hold with $\eps$ in place of $\beta$. Since $\eps>0$ is arbitrary, this is a contradiction.
\end{proof}

\subsection{Existence of a solution}
We are now in position to prove that the couple $(u_0,\Omega_0)$, constructed in Section \ref{sub:construction}, is a solution to \eqref{e:pb}.

\begin{prop}[Existence of a solution]
There is a dimensional constant $C_d>0$ such that if $D$ is a bounded open set of $\R^d$ and $\beta>0$ is a given posiotive constant, then the following holds. For every set $E\subset\R^d$ of finite perimeter and every $v\in H^1(\R^d)$ satisfying
$$v\ge m\quad\text{on}\quad D\quad\text{for some constant}\quad m>0,$$
there is a solution $(u,\Omega)$ of the problem \eqref{e:pb}.
\end{prop}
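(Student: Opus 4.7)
The candidate $(u_0,\Omega_0)$ is already constructed in Section~\ref{sub:construction} as a joint limit of the solutions $(u_{\varepsilon_n},\Omega_{\varepsilon_n})$ of the auxiliary problems \eqref{e:pb_aux}. The plan is to complete the existence proof in three steps: (i) use Lemma~\ref{l:opt_cond} and Proposition~\ref{p:non-degeneracy} to show that $u_0$ is bounded below by a strictly positive constant on $D$; (ii) leverage this lower bound to upgrade $\Omega_0$ from having only locally finite perimeter on superlevel sets to belonging to $\mathcal E$; (iii) pass to the limit in the optimality of $(u_{\varepsilon_n},\Omega_{\varepsilon_n})$ tested against a suitable truncation of an arbitrary competitor.

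For step (i), Lemma~\ref{l:opt_cond} shows that $u_0$ satisfies \eqref{e:opt_cond} for a.e. $t>0$. Since $u_0\ge 0$ on $D$ (as a pointwise limit of non-negative $u_{\varepsilon_n}$) and $u_0=v\ge m$ on $\partial D$ in the trace sense, Proposition~\ref{p:non-degeneracy} produces $t_0>0$ with $|\{u_0\le t_0\}|=0$; the H\"older continuity from Proposition~\ref{p:holder}, inherited by $u_0$, then yields $u_0\ge t_0$ pointwise on $D$. For step (ii), the uniform convergence $u_{\varepsilon_n}\to u_0$ on every $\overline{D_\delta}$ gives $u_{\varepsilon_n}\ge t_0/2$ there for $n$ large, and the bound $J_\beta(u_{\varepsilon_n},\Omega_{\varepsilon_n})\le J_\beta(v,E)$ yields
$$\text{Per}(\Omega_{\varepsilon_n};D_\delta)\le \frac{4}{\beta\, t_0^{\,2}}\,J_\beta(v,E).$$
Lower semicontinuity of the perimeter under $L^1$-convergence transfers the same bound to $\Omega_0$; sending $\delta\to 0$ and combining with $\Omega_0=E$ outside $D$ gives $(u_0,\Omega_0)\in\mathcal V\times\mathcal E$.

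For step (iii), fix any $(\tilde u,\tilde \Omega)\in\mathcal V\times\mathcal E$ with $J_\beta(\tilde u,\tilde\Omega)<\infty$; after replacing $\tilde u$ by $\tilde u\vee 0$ we may assume $\tilde u\ge 0$. Set $\tilde u_n:=\tilde u\vee\varepsilon_n$; since $v\ge m>\varepsilon_n$ outside $D$, we have $\tilde u_n-v\in H^1_0(D)$, so $\tilde u_n\in\mathcal V$, and $(\tilde u_n,\tilde\Omega)$ is admissible in \eqref{e:pb_aux} at level $\varepsilon_n$. The optimality of $(u_{\varepsilon_n},\Omega_{\varepsilon_n})$ gives $J_\beta(u_{\varepsilon_n},\Omega_{\varepsilon_n})\le J_\beta(\tilde u_n,\tilde \Omega)$. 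The right-hand side converges to $J_\beta(\tilde u,\tilde \Omega)$: the Dirichlet integral equals $\int_{\{\tilde u\ge\varepsilon_n\}}|\nabla\tilde u|^2\,dx$ and increases to $\int_D|\nabla\tilde u|^2\,dx$ by monotone convergence, while the boundary integral $\int_{\partial^\ast\tilde\Omega}\tilde u_n^2\,d\HH^{d-1}$ converges by dominated convergence with majorant $2\tilde u^2+2\varepsilon_1^{\,2}$, which is $\HH^{d-1}$-integrable on $\partial^\ast\tilde\Omega$ because $J_\beta(\tilde u,\tilde\Omega)<\infty$ and $\text{Per}(\tilde\Omega)<\infty$. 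The left-hand side is lower semicontinuous: the Dirichlet part by weak $H^1$ convergence $u_{\varepsilon_n}\rightharpoonup u_0$, and the boundary part by Lemma~\ref{l:semicontinuity} with $A=D$, whose hypotheses are fulfilled using $0\le u_{\varepsilon_n}\le h$ and the a.e. convergence $\mathbf{1}_{\Omega_{\varepsilon_n}}\to\mathbf{1}_{\Omega_0}$ on $D$ secured in step (ii); the contributions on $\partial^\ast\Omega\setminus D=\partial^\ast E\setminus D$ coincide for all $n$ and in the limit, because every admissible function equals $v$ there. Combining these facts yields $J_\beta(u_0,\Omega_0)\le J_\beta(\tilde u,\tilde \Omega)$, proving optimality. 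The main subtlety is step (iii): Lemma~\ref{l:semicontinuity} is stated for a fixed open set $A$, so one must split the boundary integrals into an interior part on $D$ (handled by the lemma) and an exterior part on $\R^d\setminus D$ (common to all competitors by the boundary condition $u=v$ outside $D$), and separately verify the dominated convergence on the competitor side using the finite-energy hypothesis together with $\int_{\partial^\ast E}v^2\,d\HH^{d-1}<\infty$.
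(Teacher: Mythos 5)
Your proposal is correct and follows essentially the same route as the paper: non-degeneracy of $u_0$ via Lemma~\ref{l:opt_cond} and Proposition~\ref{p:non-degeneracy}, finiteness of $\text{Per}(\Omega_0)$ by lower semicontinuity on $D_\delta$ plus the fixed exterior data, and optimality by testing $(u_{\eps_n},\Omega_{\eps_n})$ against a truncation $\widetilde u\vee\eps$ of an arbitrary competitor and passing to the limit with Lemma~\ref{l:semicontinuity}. The only differences are cosmetic refinements (truncating at level $\eps_n$ rather than a separate $\eps\to0$ limit, and spelling out the monotone/dominated convergence on the competitor side), which if anything make the limit passage more explicit than in the paper.
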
	
\begin{proof}
Let $(u_0,\Omega_0)$ be as in Section \ref{sub:construction}. Then, by Lemma \ref{l:opt_cond}, $(u_0,\Omega_0)$ satisfies the optimality condition \eqref{e:opt_cond_nd}.  Now, by Proposition \ref{p:non-degeneracy} we get that $u_0\ge t$ in $D$, for some $t>0$. In particular, $\Omega_0$ has finite perimeter in $D$. Precisely, for every $\delta>0$, we have 
\begin{align*}
\text{Per}(\Omega_0;D_\delta)&\le \liminf_{n\to\infty}\text{Per}(\Omega_{\eps_n};D_\delta)\le \frac4{t^2}\liminf_{n\to\infty}\int_{D_\delta\cap\partial^\ast\Omega_{\eps_n}}u_{\eps_n}^2\,d\HH^{d-1}\\
&\le \frac4{\beta t^2}\liminf_{n\to\infty}J_\beta\big(u_{\eps_n},\Omega_{\eps_n}\big)\le \frac4{\beta t^2} J_\beta(v,E).
\end{align*}
Passing to the limit as $\delta\to0$, we get 
\begin{align*}
\text{Per}(\Omega_0;D)\le \frac4{\beta t^2} J_\beta(v,E).
\end{align*}
In particular, this implies that $\Omega_0$ is a set of finite perimeter in $\R^d$. Indeed, 
\begin{align*}
\text{Per}(\Omega_0)&\le \text{Per}(\Omega_0;D)+2\text{Per}(D)+\text{Per}(\Omega_0;\R^d\setminus \overline D)\\
&\le \frac4{\beta t^2} J_\beta(v,E)+2\text{Per}(D)+\text{Per}(E;\R^d\setminus \overline D).
\end{align*}
Thus, the couple $(u_0,\Omega_0)$ is admissible in \eqref{e:pb}; it now remains to prove that it is optimal. Let $\widetilde u\in H^1(D)$ be non-negative on $D$ and such that $u-v\in H^1_0(D)$. Let $\widetilde \Omega\subset\R^d$ be a set of finite perimeter such that $\widetilde \Omega=E$ on $\R^d\setminus D$. It is sufficient to prove that 
$$J_\beta(u_0,\Omega_0)\le J_\beta(\widetilde u,\widetilde\Omega).$$
Let $\eps>0$ be fixed. We now use the couple $(\widetilde u\vee \eps,\widetilde\Omega)$ to test the optimality of $\big(u_{\eps_n},\Omega_{\eps_n}\big)$ :
$$J_\beta\big(u_{\eps_n},\Omega_{\eps_n}\big)\le  J_\beta(\widetilde u\vee \eps,\widetilde\Omega).$$
Passing to the limit as $\eps\to0$, we get 
$$J_\beta\big(u_{\eps_n},\Omega_{\eps_n}\big)\le  J_\beta(\widetilde u,\widetilde\Omega).$$
Now, Lemma \ref{l:semicontinuity} and the semicontinuity of the $H^1$ norm gives that $J_\beta (u_0,\Omega_{0})\le  J_\beta(\widetilde u,\widetilde\Omega)$,
which concludes the proof.
\end{proof}
	
\section{Regularity of the free boundary}\label{s:regularity}
In this section, we prove the regularity of the free boundary. In Theorem \ref{t:regularity}, we prove that the solutions of \eqref{e:pb} are almost-minimizers for the perimeter in $D$. As a consequence, $\partial\Omega$ can be decomposed into a regular and a singular part and that the regular part is $C^{1,\alpha}$ manifold. Then, in Theorem \ref{t:higher_regularity}, we prove that the regular part of the free boundary is $C^\infty$ smooth. 
\begin{teo}\label{t:regularity}
Let $(u,\Omega)$ be a solution to \eqref{e:pb}. 
there is a constant $C>0$ such that $\Omega$ is an almost-minimizer of the perimeter  in the following sense: 
$$\text{Per}\,\big(\Omega\,; B_r(x_0)\big) \leq \big(1 + Cr^{\sfrac13}  \big) \text{Per}\,\big(\Omega'; B_r(x_0)\big),$$
for every ball $B_r(x_0)\subset D$ and every set $\Omega'\subset \R^d$ such that $\Omega=\Omega'$ outside $B_r(x_0)$. 

In particular, the free boundary $\partial\Omega\cap D$ can be decomposed as the disjoint union of a regular part $\text{\rm Reg}(\partial\Omega)$ and a singular part $\text{\rm Sing}(\partial\Omega)$, where 
\begin{enumerate}[\quad \rm(i)]
	\item $\text{\rm Reg}(\partial\Omega)$ is a relatively open subset of $\partial\Omega$ and is a $C^{1,\alpha}$ smooth manifold;
	\item $\text{\rm Sing}(\partial\Omega)$ is a closed set, which is empty if $d\le 7$, discrete if $d=8$, and of Hausdorff dimension $d-8$, if $d>8$.
\end{enumerate}	
\end{teo}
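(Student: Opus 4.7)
The plan is to derive the almost-minimality of $\Omega$ directly from comparing $(u,\Omega)$ with variational competitors, and then invoke the classical regularity theory for almost-minimizers of the perimeter. Fix a ball $B_r(x_0)\subset D$ and a set of finite perimeter $\Omega'\subset\R^d$ with $\Omega'=\Omega$ outside $B_r(x_0)$. I would test the minimality of $(u,\Omega)$ against the competitor $(u,\Omega')$. This is admissible: the condition $u\in\mathcal V$ does not involve $\Omega$, and $\Omega'\in\mathcal E$ since $B_r(x_0)\subset D$. Because the Dirichlet terms are identical and the reduced boundaries coincide outside $B_r(x_0)$ up to $\HH^{d-1}$-null sets, the minimality reduces to
\begin{equation*}
\int_{\partial^\ast\Omega\,\cap\, B_r(x_0)} u^2\,d\HH^{d-1}\;\le\; \int_{\partial^\ast\Omega'\,\cap\, B_r(x_0)} u^2\,d\HH^{d-1}.
\end{equation*}

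Next, as observed in Section \ref{sub:sketch}, any solution of \eqref{e:pb} satisfies the optimality condition \eqref{e:opt_cond_intro}, so by Proposition \ref{p:non-degeneracy} the function $u$ is bounded below by a strictly positive constant $\mu>0$ in $D$, and by Proposition \ref{p:holder} it is H\"older continuous with exponent $\sfrac13$ on $\overline D_\delta$ for every $\delta>0$. Writing $U:=u(x_0)^2\ge \mu^2$ and using that $u$ is bounded on compact subsets of $D$, H\"older continuity yields a constant $C$ (depending only on $\|u\|_{L^\infty(D_\delta)}$ and the H\"older seminorm of $u$) such that $|u(x)^2-U|\le C r^{\sfrac13}$ for all $x\in B_r(x_0)$. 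Substituting in the boundary inequality gives
\begin{equation*}
\big(U-C r^{\sfrac13}\big)\,\text{Per}(\Omega;B_r(x_0))\;\le\;\big(U+C r^{\sfrac13}\big)\,\text{Per}(\Omega';B_r(x_0)).
\end{equation*}
Since $U\ge \mu^2>0$, for $r$ smaller than a constant depending only on $\mu$ and $C$ we can divide to obtain
\begin{equation*}
\text{Per}(\Omega;B_r(x_0))\;\le\;\frac{U+C r^{\sfrac13}}{U-C r^{\sfrac13}}\,\text{Per}(\Omega';B_r(x_0))\;\le\;\big(1+C' r^{\sfrac13}\big)\,\text{Per}(\Omega';B_r(x_0)),
\end{equation*}
which is the desired almost-minimality. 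For larger $r$ the inequality is automatic up to adjusting the constant $C'$.

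Once the almost-minimality with gauge $r^{\sfrac13}$ is established, the decomposition $\partial\Omega\cap D=\text{Reg}(\partial\Omega)\sqcup\text{Sing}(\partial\Omega)$, the relative openness and $C^{1,\alpha}$ smoothness of $\text{Reg}(\partial\Omega)$, and the dimension estimates on $\text{Sing}(\partial\Omega)$, all follow from Tamanini's regularity theorem \cite{tamanini} applied to $\Omega$ in $D$, together with the standard dimension-reduction argument going back to the regularity theory of area-minimizing sets. The substantive work has already been done in the earlier sections: the H\"older continuity and the strictly positive lower bound on $u$; given those, the only new ingredient here is the bookkeeping in the competitor comparison, which is the step I would view as the core of the argument but not a genuine obstacle.
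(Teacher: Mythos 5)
Your proposal is correct and follows essentially the same route as the paper: test optimality against $(u,\Omega')$, reduce to comparing the boundary integrals of $u^2$ over $B_r(x_0)$, and use the non-degeneracy ($u\ge t>0$) together with the $C^{0,\sfrac13}$ estimate to turn the ratio $\max u^2/\min u^2$ into the factor $1+C r^{\sfrac13}$, after which Tamanini's theorem gives the decomposition. The only cosmetic difference is that you normalize by $u(x_0)^2$ rather than by $\min_{B_r(x_0)}u^2$, which changes nothing.
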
	
\begin{proof}
We first notice that by Lemma \ref{l:holder}, $u \in C^{0,\sfrac13}(D)$. Let $\delta>0$, $x_0\in D_\delta$ and $r<\sfrac{\delta}2$.\\
We consider a set $\Omega'\subset \R^d$ such that $\Omega'\Delta \Omega \subset\subset B_r(x_0)$. Testing the optimality of $(u,\Omega)$ against $(u,\Omega')$ we get that
		\begin{align*}
		\int_{B_r(x_0)\cap \partial^\ast \Omega}u^2d\HH^{n-1} \leq \int_{B_r(x_0)\cap \partial^\ast \Omega'}u^2d\HH^{n-1},
		\end{align*}
		which implies that
		$$\Big(\min_{B_r(x_0)} u^2\Big)\, \text{Per}\big(\Omega\,; B_r(x_0)\big) \leq  \Big(\max_{B_r(x_0)} u^2\Big)\, \text{Per}\big(\Omega'; B_r(x_0)\big).$$
		By regularity of $u$, we have that 
		$$\ds\max_{B_r(x_0)} u^2 \leq  \min_{B_r(x_0)} u^2 + Cr^{\sfrac13}\le \Big(\min_{B_r(x_0)} u^2\Big)\left(1+\frac{C}{t}r^{\sfrac13}\right),$$
		where in the second inequality, we used that $u\ge t>0$. Thus, we obtain  
		$$\text{Per}\big(\Omega\,; B_r(x_0)\big) \leq \left(1 + \frac{C}{t}r^{\sfrac13}  \right) \text{Per}\big(\Omega'; B_r(x_0)\big),$$
		which proves that $\Omega$ is an almost-minimizer of the perimeter in $D$.
\end{proof}


We next prove that regular part the free boundary $\text{\rm Reg}(\partial\Omega)$ is $C^\infty$. 
\begin{teo}\label{t:higher_regularity}
	Let $(u,\Omega)$ be a solution to \eqref{e:pb}. Let 
	$$D\cap\partial\Omega=\text{\rm Reg}(\partial\Omega)\,\cup\,\text{\rm Sing}(\partial\Omega)$$
	be the decomposition of the free boundary given by Theorem \ref{t:regularity}. Then, in a neighborhood of any point $x_0\in \text{\rm Reg}(\partial\Omega)$,  $\partial\Omega$ is $C^{\infty}$-regular and the function $u$ is $C^\infty$ on $\partial\Omega$.
\end{teo}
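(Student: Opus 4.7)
The proof follows the scheme outlined in Section \ref{sub:sketch}: it combines the transmission characterization of $u$ near regular points (Lemma \ref{l:robin_condition}), the transmission Schauder theory of \cite{css,D} (Lemma \ref{l:reg1}), and a mean-curvature-type equation for $\partial\Omega$ obtained from inner variations, all packaged in a standard bootstrap. Fix $x_0\in\text{\rm Reg}(\partial\Omega)$ and a small ball $B=B_r(x_0)\subset D$; by Theorem \ref{t:regularity} we may assume $B\cap\partial\Omega$ is a $C^{1,\alpha}$ graph, and by the non-degeneracy in Theorem \ref{t:main}(ii) we have $u\ge c_0>0$ on $B$.

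Denote by $u_\pm$ the restrictions of $u$ to $\overline{\Omega}\cap B$ and to $\overline{D\setminus\Omega}\cap B$. Lemma \ref{l:robin_condition}, obtained by testing minimality against outer variations $u+\eps\varphi$, $\varphi\in C_c^\infty(B)$, and integrating by parts on each side of $\partial\Omega$, provides the transmission problem
$$\Delta u_\pm=0, \quad u_+=u_-=u \text{ on }\partial\Omega, \quad \frac{\partial u_+}{\partial\nu_\Omega}-\frac{\partial u_-}{\partial\nu_\Omega}+2\beta u=0 \text{ on }\partial\Omega.$$
Since the interface $B\cap\partial\Omega$ is already $C^{1,\alpha}$ and $u$ is bounded below away from zero, Lemma \ref{l:reg1} (i.e., the Schauder-type estimates of \cite{css,D} for Robin-transmission problems) yields $u_\pm\in C^{1,\alpha}$ up to $\partial\Omega$, which is the base case of the induction.

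Next, I would derive an Euler--Lagrange equation for the shape of $\partial\Omega$ via inner variations. Given $X\in C_c^\infty(B;\R^d)$ and $\Phi_t=\mathrm{id}+tX$, one tests minimality against the competitor $\bigl(u\circ\Phi_t^{-1},\,\Phi_t(\Omega)\bigr)$, which is admissible since $\Phi_t$ is the identity outside $B$. Computing $\frac{d}{dt}\big|_{t=0}J_\beta$ by the classical first-variation formulas for the Dirichlet integral and for a weighted perimeter, and then localizing by choosing $X=\varphi\,\nu_\Omega$ with $\varphi\in C_c^\infty(B\cap\partial\Omega)$, produces a scalar boundary identity of the form
$$H_{\partial\Omega}=F\bigl(u,\nabla u_+,\nabla u_-\bigr)\quad\text{on }B\cap\partial\Omega,$$
where $H_{\partial\Omega}$ is the mean curvature and $F$ is a rational function of its arguments (hence smooth on the relevant range, using $u\ge c_0>0$).

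The bootstrap then closes quickly. Assume inductively that $B\cap\partial\Omega\in C^{k+1,\alpha}$. By Lemma \ref{l:reg1} applied to the transmission problem above, $u_\pm\in C^{k+1,\alpha}$ up to $\partial\Omega$, so the right-hand side of the mean-curvature equation lies in $C^{k,\alpha}(B\cap\partial\Omega)$. Writing $\partial\Omega$ locally as the graph of a function $g$ and viewing $H=F$ as a quasilinear elliptic equation for $g$, Schauder theory upgrades $g$ to $C^{k+2,\alpha}$, hence $\partial\Omega\in C^{k+2,\alpha}$. Iterating gives $\partial\Omega\in C^\infty$ and, via one more application of Lemma \ref{l:reg1}, $u_\pm\in C^\infty$ up to $\partial\Omega$. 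I expect the most delicate step to be the rigorous derivation of the mean-curvature equation from inner variations: transporting $u$ by $\Phi_t$ perturbs both the bulk Dirichlet integral and the weighted surface integral simultaneously, and one must carefully track the cancellations between the two sides of $\partial\Omega$ to extract the single scalar identity that drives the bootstrap.
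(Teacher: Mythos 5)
Your proposal is correct and follows essentially the same route as the paper: the transmission/Robin condition from outer variations (Lemma \ref{l:robin_condition}), the $C^{1,\alpha}$-interface Schauder theory of \cite{css} and \cite{D} to pass from boundary regularity to regularity of $u_\pm$ (Lemma \ref{l:reg1}), the mean-curvature equation from inner variations along $\xi=\phi\,e_d$ (Lemma \ref{l:reg2}, where the lower bound $u\ge c_0>0$ is indeed what makes the right-hand side well defined), and the alternating bootstrap. The step you flag as delicate is precisely the content of the paper's Lemma \ref{l:reg2}.
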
	
\begin{proof}
We fix a point $x_0\in \text{\rm Reg}(\partial\Omega)$. Without loss of generality, we can assume that $x_0=0$. 

\noindent{\bf Step 1.\,\it Notation.} For any $x\in \R^d$, we use the notation $x=(x',x_d)$, where $x'\in \R^{d-1}$ and $x_d\in\R$.  By the $C^{1,\alpha}$ regularity of $\text{\rm Reg}(\partial\Omega)$, in a neighborhood of the origin $B'\times(-\eps,\eps)\subset\R^{d-1}\times\R$, $\partial\Omega$ is the graph of a $C^{1,\alpha}$ regular function 
$\eta:B'\to\R$, where $B'$ is a ball in $\R^{d-1}$; the set $\Omega$ coincides with the subgraph of $\eta$ in a neighborhood of the origin:
$$B'\times(-\eps,\eps)\cap \Omega=\big\{(x',x_d)\in B'\times (-\eps;\eps)\ :\ x_d<\eta(x')\big\}.$$
and the exterior normal $\nu_\Omega$ is given by 
\begin{equation}\label{e:nu_Omega}
\nu_\Omega=\frac{(-\nabla_{\!x'}\eta,1)}{\sqrt{1+|\nabla_{\!x'}\eta|^2}},
\end{equation}
where $\nabla_{\!x'}\eta$ is the gradient of $\eta$ in the first $d-1$ variables.  Let $u_+$ and $u_-$ be the restrictions of $u$ on the sets $\overline\Omega$ and $D\setminus \Omega$; since $u$ is continuous across $\partial\Omega$, we have $u_+=u_-$ on $\partial\Omega$. Moreover, we write the gradients of $u_+$ and $u_-$ as 
$$\nabla u_\pm=\big(\nabla_{\!x'}u_{\pm},\partial_{x_d}u_{\pm}\big)\in\R^{d-1}\times\R.$$

\noindent{\bf Step 2.\,\it Transmission condition and $C^{1,\alpha}$ regularity of $u$.} In Lemma \ref{l:robin_condition}, we keep fixed and free boundary $\partial\Omega$ and we use vertical perturbations of the function $u$ to obtain a Robin-type transmission condition on $\partial\Omega$. We notice that the recent results \cite{css} and \cite{D} imply the $C^{1,\alpha}$-regularity of $u_+$ and $u_-$, up to the boundary $\partial\Omega$. Thus, the gradient is well-defined and the transmission conditions \eqref{e:robin0} hold in the classical sense. 

\noindent{\bf Step 3.\,\it Optimality condition and $C^{2,\alpha}$ regularity of $ \text{\rm Reg}(\partial\Omega)$.} In Lemma \ref{l:reg2} we perform variatons of the optimal set to find the geometric equation solved by $\partial\Omega$. Precisely, we find that the curvature of the optimal set solves an equation of the form 
$$\text{``Mean curvature of $\partial\Omega$"}=F(\nabla u_+,\nabla u_-,u_\pm)\quad\text{on}\quad \partial\Omega.$$
In particular, this implies that if $u$ is $C^{k,\alpha}$, for some $k\ge 1$, then $\partial\Omega$ is $C^{k+1,\alpha}$.

\noindent{\bf Step 4.\,\it Bootstrap and $C^\infty$ regularity of $\partial\Omega$.} In Lemma \ref{l:reg1} we use the recent results of \cite{D} to show that if the boundary $\partial\Omega$ is $C^{k,\alpha}$ for some $k\ge 2$, then the solutions $u_+$ and $u_-$ are also $C^{k,\alpha}$ regular up to the boundary $\partial\Omega$. Finally, applying this result (Lemma \ref{l:reg1}) and the result from the previous step (Lemma \ref{l:reg2}), we get that $\partial\Omega$ is $C^\infty$. 
\end{proof}

\begin{lemma}[Robin and continuity conditions on $\partial\Omega$]\label{l:robin_condition}
Suppose that $\partial\Omega$ is $C^{1,\alpha}$ regular in the neighborhood of the origin. Let $\eta:B'\to\R$, $u_+$ and $u_-$ be as above. Then, for every $x'\in B'$ we have 
\begin{equation}\label{e:robin0}
\begin{cases}
\begin{array}{ll}
\nabla_{\!x'}\eta\cdot \nabla_{\!x'}u_+-\nabla_{\!x'}\eta\cdot \nabla_{\!x'}u_-=-\big(\partial_{x_d}u_+-\partial_{x_d}u_-\big)|\nabla_{\!x'}\eta|^2\\
\sqrt{1+|\nabla_{\!x'}\eta|^2}\big(\partial_{x_d}u_+-\partial_{x_d}u_-\big)+\beta u=0,
\end{array}
\end{cases}
\end{equation}	
where $u_+$, $u_-$ and their partial derivatives are calculated in $(x',\eta(x'))\in \partial\Omega$. 
\end{lemma}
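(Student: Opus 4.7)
The plan is to derive the two equations in \eqref{e:robin0} separately: the first as a purely geometric consequence of the fact that $u$ does not jump across $\partial\Omega$, and the second as the Euler--Lagrange equation obtained by testing the optimality of $(u,\Omega)$ against vertical perturbations of $u$ with $\Omega$ held fixed.

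First, since $u\in H^1(D)$ and $\partial\Omega$ is $C^{1,\alpha}$ near the origin, the traces of $u_+$ and $u_-$ on $\partial\Omega$ agree. Combined with the boundary regularity results of \cite{css} and \cite{D} (cited in the proof of Theorem \ref{t:higher_regularity} and also invoked in Step 2 of that proof), $u_\pm$ are $C^{1,\alpha}$ up to $\partial\Omega$, so the identity $u_+(x',\eta(x'))=u_-(x',\eta(x'))$ holds classically for every $x'\in B'$. Differentiating with respect to $x_i'$ yields, for each $i=1,\dots,d-1$,
\begin{equation*}
\partial_{x_i'}u_++\partial_{x_d}u_+\,\partial_{x_i'}\eta=\partial_{x_i'}u_-+\partial_{x_d}u_-\,\partial_{x_i'}\eta.
\end{equation*}
Multiplying by $\partial_{x_i'}\eta$ and summing in $i$ gives exactly the first equation of \eqref{e:robin0}.

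For the second equation, let $\varphi\in C^\infty_c(B'\times(-\eps,\eps))$. Then $(u+s\varphi,\Omega)\in\mathcal V\times\mathcal E$ for every $s\in\R$, and the optimality of $(u,\Omega)$ gives
\begin{equation*}
0=\frac{d}{ds}\Big|_{s=0}J_\beta(u+s\varphi,\Omega)=2\int_{D}\nabla u\cdot\nabla\varphi\,dx+2\beta\int_{\partial^\ast\Omega}u\varphi\,d\HH^{d-1}.
\end{equation*}
Splitting the first integral over $\Omega$ and $D\setminus\Omega$, recalling from Remark 1.2 that $u_\pm$ is harmonic in the respective open set, and using the $C^{1,\alpha}$ regularity of $u_\pm$ up to $\partial\Omega$ to justify the classical divergence theorem, we obtain
\begin{equation*}
\int_{\partial\Omega}\left(\frac{\partial u_+}{\partial\nu_\Omega}-\frac{\partial u_-}{\partial\nu_\Omega}+\beta u\right)\varphi\,d\HH^{d-1}=0,
\end{equation*}
where the sign flip on the $u_-$ term comes from the fact that the outer normal to $D\setminus\Omega$ along $\partial\Omega$ is $-\nu_\Omega$. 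By the arbitrariness of $\varphi$ on $\partial\Omega$, the transmission condition
$\partial_{\nu_\Omega}u_+-\partial_{\nu_\Omega}u_-+\beta u=0$ holds pointwise on $\partial\Omega\cap(B'\times(-\eps,\eps))$.

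Finally I plug in the explicit expression \eqref{e:nu_Omega} for $\nu_\Omega$:
\begin{equation*}
\partial_{\nu_\Omega}u_\pm=\frac{-\nabla_{x'}\eta\cdot\nabla_{x'}u_\pm+\partial_{x_d}u_\pm}{\sqrt{1+|\nabla_{x'}\eta|^2}}\,,
\end{equation*}
subtract the two expressions, and eliminate the tangential term using the first equation already proved: $\nabla_{x'}\eta\cdot(\nabla_{x'}u_+-\nabla_{x'}u_-)=-|\nabla_{x'}\eta|^2(\partial_{x_d}u_+-\partial_{x_d}u_-)$. The numerator collapses to $(1+|\nabla_{x'}\eta|^2)(\partial_{x_d}u_+-\partial_{x_d}u_-)$ and division by $\sqrt{1+|\nabla_{x'}\eta|^2}$ yields the second equation of \eqref{e:robin0}. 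The only delicate step is the classical integration by parts on the two sides of $\partial\Omega$; this is the reason for citing \cite{css,D} to get $u_\pm\in C^{1,\alpha}$ up to the boundary, and once this regularity is granted the argument is a standard first variation.
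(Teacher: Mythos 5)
Your argument is correct and is essentially the paper's proof: the same first variation $s\mapsto J_\beta(u+s\varphi,\Omega)$ yields the weak Robin transmission condition \eqref{e:robin}, and the same differentiation of the trace identity $u_+(x',\eta(x'))=u_-(x',\eta(x'))$ yields the tangential relation, the two being combined via \eqref{e:nu_Omega} exactly as in \eqref{e:robin2}--\eqref{e:robin3}. The only difference is the order of the two steps and your slightly more explicit invocation of \cite{css,D} to justify the classical integration by parts, which the paper defers to Step 2 of the proof of Theorem \ref{t:higher_regularity}.
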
	
\begin{proof}
Let $\phi\in C^{\infty}_c(D)$ be a smooth function supported in $B'\times(-\eps,\eps)$.
Then, the optimality of $u$ gives that
\begin{align*}
0=\frac{\partial}{\partial t}\Big\vert_{t=0}J_\beta(u+t\phi,\Omega)&=\int_{D\setminus\partial\Omega}2\nabla u\cdot\nabla \phi\,dx+\beta\int_{\partial\Omega}2u\phi\,d\HH^{d-1}\\
&=\int_{\partial\Omega}2\big(\nu_\Omega\cdot\nabla u_+-\nu_\Omega\cdot\nabla u_-+\beta u\big)\phi\,d\HH^{d-1},
\end{align*}
where in the last inequality we integrated by parts $u_+$ in $\Omega$ and $u_-$ in $D\setminus\Omega$. 
Since $\phi$ is arbitrary we get that $u$ satisfies the Robin-type condition on $\partial\Omega$
\begin{equation}\label{e:robin}
\nu_\Omega\cdot\nabla u_+-\nu_\Omega\cdot\nabla u_-+\beta u\qquad\text{on}\qquad \partial\Omega.
\end{equation}
Now, using \eqref{e:nu_Omega}, we can re-write this as 
\begin{equation}\label{e:robin2}
\big(-\nabla_{\!x'}\eta\cdot\nabla_{\!x'}u_++\partial_{x_d}u_+\big)-\big(-\nabla_{\!x'}\eta\cdot\nabla_{\!x'}u_-+\partial_{x_d}u_-\big)+\beta u \sqrt{1+|\nabla_{\!x'}\eta|^2}=0.
\end{equation}
On the other hand $u$ is continuous across $\partial\Omega$. This means that
$$\nabla_{\!x'}u_+(x',\eta(x'))+\partial_{x_d}u_+(x',\eta(x'))\nabla_{\!x'}\eta=\nabla_{\!x'}u_-(x',\eta(x'))+\partial_{x_d}u_-(x',\eta(x'))\nabla_{\!x'}\eta.$$
Multiplying by $\nabla_{\!x'}\eta$, we get 
\begin{equation}\label{e:robin3}
\nabla_{\!x'}\eta\cdot \nabla_{\!x'}u_++\partial_{x_d}u_+|\nabla_{\!x'}\eta|^2=\nabla_{\!x'}\eta\cdot \nabla_{\!x'}u_-+\partial_{x_d}u_-|\nabla_{\!x'}\eta|^2,
\end{equation}
where $u_+$, $u_-$ and their partial derivatives are calculated in $(x',\eta(x'))$. Putting together \eqref{e:robin2} and \eqref{e:robin3}, we get \eqref{e:robin0}. 
\end{proof}

\begin{lemma}[Smooth boundary $\Rightarrow$ smooth function]\label{l:reg1}
Let $(u,\Omega)$ be a solution of \eqref{e:pb}. Suppose that, in a neighborhood of zero, $\partial\Omega$ is $C^{k,\alpha}$-regular for some $k\ge 1$. Then, in a neighborhood of the origin, the functions $u_+$ and $u_-$ are $C^{k,\alpha}$ up to the boundary $\partial\Omega$.
\end{lemma}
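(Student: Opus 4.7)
The plan is to reduce, by a local $C^{k,\alpha}$ flattening of the free boundary, to a classical two-sided elliptic transmission problem on a half-space, for which the Schauder-type regularity up to the interface is provided by \cite{css} and \cite{D}.

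Using the $C^{k,\alpha}$ graph representation $\partial\Omega\cap\big(B'\times(-\eps,\eps)\big)=\{x_d=\eta(x')\}$ from the setup of Theorem \ref{t:higher_regularity}, I would introduce the change of variables $\Phi(y',y_d)=\big(y',y_d+\eta(y')\big)$, a $C^{k,\alpha}$ diffeomorphism sending $\{y_d>0\}$ onto a neighborhood of the origin in $\Omega$ and $\{y_d<0\}$ onto a neighborhood in $D\setminus\overline\Omega$. Setting $\widetilde u_\pm:=u_\pm\circ\Phi$, the harmonicity of $u_\pm$ becomes a divergence-form equation
$$\text{div}\big(A\nabla\widetilde u_\pm\big)=0\quad\text{in}\quad\{\pm y_d>0\},$$
where $A=A(y)$ is a symmetric, uniformly elliptic matrix of class $C^{k-1,\alpha}$ near the origin, built from $D\Phi$ in the usual way. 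The two conditions \eqref{e:robin0} of Lemma \ref{l:robin_condition} transform, after some algebra, into the continuity condition $\widetilde u_+=\widetilde u_-$ on $\{y_d=0\}$ together with the conormal transmission condition
$$\big(A\nabla\widetilde u_+\big)\cdot e_d-\big(A\nabla\widetilde u_-\big)\cdot e_d+\beta\sqrt{1+|\nabla_{\!x'}\eta|^2}\,\widetilde u=0\quad\text{on}\quad \{y_d=0\}.$$

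For the base case $k=1$, the matrix $A$ and the weight $\sqrt{1+|\nabla\eta|^2}$ are of class $C^{0,\alpha}$, and the transmission regularity theorem of \cite{css,D} yields $\widetilde u_\pm\in C^{1,\alpha}$ up to $\{y_d=0\}$; composing with $\Phi^{-1}$ transfers this regularity to $u_\pm$ up to $\partial\Omega$. For $k\ge 2$, I would proceed by a tangential bootstrap: differentiating the flattened system with respect to $y_i$, $i=1,\dots,d-1$, produces a transmission problem of the same structure for $\partial_i\widetilde u_\pm$ with coefficients of one lower order of regularity and forcing depending linearly on $(\widetilde u,\nabla\widetilde u)$, so each tangential derivative inherits additional H\"older regularity. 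The normal derivative $\partial_{y_d}\widetilde u_\pm$ in each half-space is then recovered from the interior elliptic equation itself. Iterating this procedure $k-1$ times and undoing the straightening yields the asserted $C^{k,\alpha}$ regularity of $u_\pm$ up to $\partial\Omega$.

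The principal obstacle is to verify that \cite{css,D} cover precisely the transmission problem obtained after flattening, namely two-sided divergence-form equations with $C^{k-1,\alpha}$ matrix coefficients coupled by a Robin-type jump condition containing the unknown itself, and to check that the estimates provided there are uniform up to the interface (not merely on each side). Once this is in place, the tangential-bootstrap step reduces to standard Schauder theory and the conclusion follows routinely.
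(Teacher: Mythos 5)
Your proposal follows essentially the same route as the paper: flatten the interface with the graph map $(y',y_d)\mapsto(y',y_d+\eta(y'))$, obtain a divergence-form transmission problem with $C^{k-1,\alpha}$ coefficients and a Robin-type jump condition, settle $k=1$ via \cite{css}/\cite{D}, and then bootstrap by differentiating tangentially (the paper takes all $k-1$ tangential derivatives at once within an induction on $k$, treating the lower-order terms as $C^{0,\alpha}$ divergence-form forcing, and recovers full regularity of $u_\pm$ from the trace via classical Schauder theory, exactly as you sketch). The one point you flag as an obstacle -- that the cited transmission results apply to this system with the unknown in the jump condition -- is handled in the paper by moving the Robin term to the right-hand side as a known $C^{0,\alpha}$ datum $g$ before invoking \cite[Theorem 1.2]{D}.
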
	
\begin{proof}
We argue by induction. The case $k=1$ follows by \cite{D}. We suppose that $k\ge 2$ and that the claim holds for $k-1$. 
Suppose that $\partial\Omega$ is the graph of $\eta:B'\to\R$, $\eta\in C^{k,\alpha}(B')$, and consider the functions
$$v_+(x',x_d):=u_+(x',x_d+\eta(x'))\qquad\text{and}\qquad v_-(x',x_d):=u_-(x',x_d+\eta(x')),$$
defined on the half-space $\{x_d\ge 0\}$.
We set 
$$A_\eta=\begin{pmatrix}
\mathcal N_{d-1} & -(\nabla_{\!x'}\eta)^t\\
-\nabla_{\!x'}\eta & |\nabla_{\!x'}\eta|^2
\end{pmatrix},$$
where $\mathcal N_{d-1} $ is the null $(d-1)\times (d-1)$ matrix and we notice that $A_\eta$ has $C^{k-1,\alpha}$ regular coefficients.
Now, since $u_+$ and $u_-$ are harmonic in $\Omega$ and $D\setminus\overline\Omega$,  we have that $v_+$ and $v_-$ are solutions to the transmission problem 
$$\begin{cases}
\begin{array}{rl}
-\text{div}((Id+A_\eta)\nabla v_+)=0&\quad\text{in}\quad\{x_d>0\}\\
-\text{div}((Id+A_\eta)\nabla v_-)=0&\quad\text{in}\quad\{x_d<0\}\\
v_+=v_-&\quad\text{on}\quad\{x_d=0\}\\
\ds\partial_{x_d}v_+-\partial_{x_d}v_-+\frac{\beta}{2\sqrt{1+|\nabla_{\!x'}\eta|^2}}(v_++v_-)=0&\quad\text{on}\quad\{x_d=0\}.
\end{array}
\end{cases}$$
We now fix $k-1$ directions $i_1,\dots,i_{k-1}$, $i_j\neq d$ for every $j$, and we consider the functions $$w_+:=\partial_{i_1}\partial_{i_2}\dots\partial_{i_{k-1}}v_+\qquad\text{and}\qquad  w_-:=\partial_{i_1}\partial_{i_2}\dots\partial_{i_{k-1}}v_-.$$
We notice that, in $\{x_d>0\}$ and $\{x_{d}<0\}$ the functions $w_+$ and $w_-$ are solutions to 
$$
\ds-\text{div}\big((Id+A_\eta)\nabla w_\pm\big)+\sum_{I,J}\text{div}\big(\partial_IA_\eta\partial_J\nabla u_\pm\big)=0,$$
where the sum is over all multiindices $I$ and $J$ such that the sets $I$ and $J$ are disjoint subsets of $\{i_1,i_2,\dots,i_{k-1}\}$, $I\cup J=\{i_1,i_2,\dots,i_{k-1}\}$ and  $I$ is non-empty. In particular, using that $A_\eta\in C^{k-1,\alpha}$ and $\nabla u\in C^{k-2,\alpha}$ (since by hypothesis $u_\pm\in C^{k-1,\alpha}$), we get that $w_\pm$ solve 
$$\ds-\text{div}\big((Id+A_\eta\big)\nabla w_\pm)+\text{div}(F_\pm)=0\qquad\text{in}\qquad \{\pm x_d>0\},$$
where $F_+$ and $F_-$ are $C^{0,\alpha}$ continuous functions (depending on $i_1,\dots,i_k$). On the other hand, on the boundary $\{x_d=0\}$ we have that $w_+=w_-$ and 
$$\ds\partial_{x_d}w_+-\partial_{x_d}w_-+\partial_{i_1}\partial_{i_2}\dots\partial_{i_{k-1}}\left(\frac{\beta (u_++u_-)}{2\sqrt{1+|\nabla_{x'}\eta|^2}}\right)=0\quad\text{on}\quad\{x_d=0\}.$$
Reasoning as above, we notice that this condition can be written as 
$$\ds\partial_{x_d}w_+-\partial_{x_d}w_-=g\quad\text{on}\quad\{x_d=0\},$$
where $g$ is a $C^{0,\alpha}$ function. 
Now, applying \cite[Theorem 1.2]{D}, we get that $w_+$ and $w_-$ are $C^{1,\alpha}$ regular up to the boundary $\{x_d=0\}$. 
Thus, the trace $u_+=u_-$ is $C^{k,\alpha}$ smooth on $\{x_d=0\}$. Finally, the classical Schauder estimates give that $u_+$ and $u_-$ are $C^{k,\alpha}$ on $\{x_d\ge 0\}$ and $\{x_d\le 0\}$, respectively. 
\end{proof}

\begin{lemma}[Smooth function $\Rightarrow$ smooth boundary] \label{l:reg2}
Let $(u,\Omega)$ be a solution of \eqref{e:pb}. Suppose that, in a neighborhood of zero, $\partial\Omega$ is $C^{1,\alpha}$-regular and that the functions $u_+$ and $u_-$ are $C^{k,\alpha}$ up to the boundary $\partial\Omega$, for some $k\ge 1$. Then, 	$\partial\Omega$ is $C^{k+1,\alpha}$-regular in a neighborhood of zero.
\end{lemma}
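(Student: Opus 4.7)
The plan is to derive a mean-curvature equation for $\partial\Omega$ via inner variations of $J_\beta$ and then apply Schauder regularity for quasilinear elliptic operators with a short bootstrap. First, I fix a smooth compactly supported vector field $\xi\in C^\infty_c(B'\times(-\eps,\eps);\R^d)$ and transport both the set and the function by the near-identity flow $\Phi_t=\mathrm{id}+t\xi$: set $\Omega_t=\Phi_t(\Omega)$ and $u_t=u\circ\Phi_t^{-1}$. Since $(u_t,\Omega_t)\in\mathcal V\times\mathcal E$ for small $|t|$, the optimality of $(u,\Omega)$ forces $\frac{d}{dt}\big|_{t=0}J_\beta(u_t,\Omega_t)=0$.

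Using the expansions $\det D\Phi_t=1+t\,\mathrm{div}\,\xi+O(t^2)$ and $(D\Phi_t)^{-T}=I-t(D\xi)^T+O(t^2)$, I change variables in the Dirichlet term and integrate by parts on $\Omega$ and $D\setminus\overline\Omega$ separately, exploiting $\Delta u_\pm=0$ on each side. The surface term I treat with the tangential Jacobian of $\Phi_t$ together with the tangential divergence theorem, which produces the scalar mean curvature $H$ of $\partial\Omega$. Decomposing $\xi=(\xi\cdot\nu)\nu+\xi_\tau$ and using $\nabla_\tau u_+=\nabla_\tau u_-=:\nabla_\tau u$ (since $u_+=u_-$ on $\partial\Omega$), the tangential part of the resulting Euler--Lagrange identity collapses to $\bigl((\partial_\nu u_+-\partial_\nu u_-)+\beta u\bigr)\nabla_\tau u=0$, which is automatic by the Robin condition from Lemma \ref{l:robin_condition}. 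The normal part delivers the geometric equation
\begin{equation}\label{e:meancurv_plan}
\beta H u^2=(\partial_\nu u_+)^2-(\partial_\nu u_-)^2\quad\text{on}\quad\partial\Omega.
\end{equation}

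Next, I parametrize $\partial\Omega$ locally as the graph $\{x_d=\eta(x')\}$ with $\eta\in C^{1,\alpha}(B')$, so that \eqref{e:meancurv_plan} becomes a quasilinear uniformly elliptic equation of mean-curvature type,
\begin{equation*}
-\mathrm{div}_{x'}\left(\frac{\nabla_{x'}\eta}{\sqrt{1+|\nabla_{x'}\eta|^2}}\right)=F\bigl(x',\eta(x')\bigr),
\end{equation*}
where $F=F(\nabla u_+,\nabla u_-,u)$ is a smooth rational expression in its arguments with no singularity thanks to the lower bound $u\ge t>0$ from Theorem \ref{t:main}(ii). When $u_\pm\in C^{k,\alpha}$ up to $\partial\Omega$ and $\eta\in C^{l,\alpha}$ with $1\le l\le k$, the composed right-hand side belongs to $C^{l-1,\alpha}(B')$, and Schauder estimates for mean-curvature-type equations (applied to the $C^{1,\alpha}$ solution $\eta$) upgrade $\eta$ from $C^{l,\alpha}$ to $C^{l+1,\alpha}$. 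Iterating this bootstrap starting from $l=1$, after $k-1$ steps I reach $\eta\in C^{k+1,\alpha}$, which is the claim. The main technical obstacle is the shape-derivative computation for the two-phase functional, in particular isolating the normal component that gives \eqref{e:meancurv_plan} (the Robin condition from Lemma \ref{l:robin_condition} is crucial for killing the tangential contribution); once \eqref{e:meancurv_plan} is in hand the bootstrap is routine.
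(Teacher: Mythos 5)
Your proposal is correct and follows essentially the same route as the paper: an inner variation of $J_\beta$ along a near-identity flow yields a prescribed-mean-curvature equation for the graph function $\eta$ whose right-hand side is a rational expression in $u$, $\nabla u_\pm$ and $\nabla_{x'}\eta$, and Schauder theory for the minimal-surface operator then upgrades $\eta$ (the paper uses vertical perturbations $\xi=\phi e_d$ entirely in graph coordinates rather than your normal/tangential splitting, but since $\nabla_\tau u_+=\nabla_\tau u_-$ the resulting equations coincide with your invariant form). The only points to tidy are that the mean curvature must initially be understood distributionally---$\eta$ is a weak solution of the divergence-form equation, as $\partial\Omega$ is a priori only $C^{1,\alpha}$---and that your bootstrap takes $k$ steps (not $k-1$) to go from $C^{1,\alpha}$ to $C^{k+1,\alpha}$; neither affects the argument.
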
	
\begin{proof}
Let $\xi\in C^\infty_c(D ;\R^d)$ be a given vector field with compact support in $D$ and let $\Psi_t$ be the function
$$\Psi_t(x)=x+t\xi (x)\quad\text{for every}\quad x\in D.$$ 
Then, for $t$ small enough, $\Psi_t: D\to D$ is a diffeomorphism and setting $\Phi_t:=\Psi_t^{-1}$, the function $u_t:=u\circ \Phi_t$ is well-defined and belongs to $H^1(D)$; the function $\ds t\mapsto\int_{D}|\nabla u_t|^2\,dx$ is differentiable at $t=0$ and 
$$\frac{\partial}{\partial t}\Big\vert_{t=0}\int_{D}|\nabla u_t|^2\,dx= \int_{D}\left(-2\nabla u\, D\xi\cdot \nabla u+ |\nabla u|^2\text{\rm div}\,\xi\,\right)dx\,.$$
It is immediate to check that 
$$-2\nabla u\, D\xi\cdot \nabla u+ |\nabla u|^2\text{\rm div}\,\xi = \text{div}\big(|\nabla u|^2\xi-2(\xi\cdot\nabla u)\nabla u\big)\qquad\text{in}\qquad D\setminus\partial\Omega\,.$$
We now take $\xi$ to be smooth outside $\partial \Omega$ and such that 
$$\xi=\phi \nu_\Omega\quad\text{on}\quad \partial\Omega,$$ 
where $\nu_\Omega$ is the exterior normal to 
$\partial\Omega$ and $\phi:\partial\Omega\to\R$ is continuous and with compact support. Integrating by parts, we get 
\begin{align*}
\frac{\partial}{\partial t}\Big\vert_{t=0}\int_{D}|\nabla u_t|^2\,dx&=\int_{\partial\Omega}\Big(|\nabla u_+|^2(\xi\cdot\nu_\Omega)-2(\xi\cdot\nabla u_+)(\nu_\Omega\cdot\nabla u_+)\Big)\,d\HH^{d-1}\\
&\qquad-\int_{\partial\Omega}\Big(|\nabla u_-|^2(\xi\cdot\nu_\Omega)-2(\xi\cdot\nabla u_-)(\nu_\Omega\cdot\nabla u_-)\Big)\,d\HH^{d-1},
\end{align*}
where $u_+:=u$ on $\Omega$, and $u_-:=u$ on $D\setminus\overline \Omega$. Now, if 
$$\xi=\phi e_d\qquad\text{and}\qquad\nu_\Omega=\frac{(-\nabla_{\!x'}\eta,1)}{\sqrt{1+|\nabla_{\!x'}\eta|^2}},$$
 then 
\begin{align*}
\sqrt{1+|\nabla_{\!x'}\eta|^2}&\Big(|\nabla u_+|^2(\xi\cdot\nu_\Omega)-2(\xi\cdot\nabla u_+)(\nu_\Omega\cdot\nabla u_+)\Big)\\
&\qquad-\sqrt{1+|\nabla_{\!x'}\eta|^2}\Big(|\nabla u_-|^2(\xi\cdot\nu_\Omega)-2(\xi\cdot\nabla u_-)(\nu_\Omega\cdot\nabla u_-)\Big)\\
&=\phi\Big(|\nabla u_+|^2-|\nabla u_-|^2\Big)\\
&\qquad-2\phi\Big(\,\partial_{x_d}u_+\big(-\nabla_{\!x'}\eta\cdot\nabla_{\!x'}u_++\partial_{x_d}u_+\big)-\partial_{x_d}u_-\big(-\nabla_{\!x'}\eta\cdot\nabla_{\!x'}u_-+\partial_{x_d}u_-\big)\Big).
\end{align*}

We now suppose that $x_0\in \text{Reg}(\partial\Omega)$ and that $\partial \Omega$ is the graph of the ($C^{1,\alpha}$) function $\eta:B'\to\R$, where $B'$ is a ball in $\R^{d-1}$.
Taking $\xi=e_d\phi$ and $\Omega_t=\Phi_t(\Omega)$, we have
\begin{align*}
\frac{\partial}{\partial t}\Big\vert_{t=0}\int_{\partial\Omega_t}u_t^2\,d\HH^{d-1}&=\frac{\partial}{\partial t}\Big\vert_{t=0}\int_{B'}u^2\big(x',\eta(x')\big)\sqrt{1+|\nabla_{\!x'}\eta+t\nabla_{\!x'}\phi|^2}\,dx'\\
&=\int_{B'}\frac{u^2\big(x',\eta(x')\big)}{\sqrt{1+|\nabla_{\!x'}\eta|^2}}\,\nabla_{\!x'}\eta\cdot\nabla_{\!x'}\phi\,dx'\\
&=\int_{B'}u^2\big(x',\eta(x')\big)H\big(x',\eta(x')\big)\phi(x')\,dx'\\
&\qquad -2\int_{B'}\phi(x')u\big(x',\eta(x')\big)\frac{\big(\nabla_{\!x'}u+\partial_{x_d}u\nabla_{\!x'}\eta\big)\cdot\nabla_{\!x'}\eta}{\sqrt{1+|\nabla_{\!x'}\eta|^2}}\,dx'.
\end{align*} 
In particular, combining these two computations and using the optimality of $(u,\Omega)$, we get 
\begin{align*}
0=\frac{\partial}{\partial t}\Big\vert_{t=0}J_\beta(u_t,\Omega_t)&=\int_{B'}\beta u^2 H(x')\phi(x')\,dx'\\
&\qquad +\int_{B'}\Big(|\nabla u_+|^2-|\nabla u_-|^2\Big)\phi(x')\,dx'\\
&\qquad\qquad -\int_{B'}2\left(1+|\nabla_{\!x'}\eta|^2\right)\left((\partial_{x_d} u_+)^2-(\partial_{x_d}u_-)^2\right)\phi(x')\,dx'
\end{align*}
Since $\phi$ is arbitrary, we obtain that $\eta$ is a solution of the problem 
$$-\text{div}_{x'}\left(\frac{\nabla_{\!x'}\eta}{\sqrt{1+|\nabla_{\!x'}\eta|^2}}\right)=f(x')\qquad\text{in}\qquad B',$$
where 
\begin{align*}
f(x')&=\frac1{\beta u^2\big(x',\eta(x')\big)}\Big[\Big(|\nabla u_+|^2-|\nabla u_-|^2\Big)-2\big(1+|\nabla_{\!x'}\eta|^2\big)\Big((\partial_{x_d} u_+)^2-(\partial_{x_d}u_-)^2\Big)\Big],
\end{align*}
and all the derivatives of $u_+$ and $u_-$ are calculated at $\big(x',\eta(x')\big)$. Since the right-hand side $f$ is $C^{k-1,\alpha}$ regular, we get that $\eta$ is $C^{k+1,\alpha}$ regular. This concludes the proof. 
\end{proof}

\appendix

\section{Examples of minimizers}	
In this section, we use a calibration argument to prove that if $E=\{x_d>0\}$ and $v\equiv 1$, then in any Steiner symmetric set $D\subset\R^d$, the solution $(\Omega,u)$ is unique, $u$ is even with respect to the hyperplane $\{x_d=0\}$ and $\Omega$ is precisely the half-space $E$. Our main result is the following.
\begin{prop}
	Let $D$ be an open set, Steiner symmetric with respect to the hyperplane $\{x_d=0\}$. Let $E$ be the half-ball $E=B\cap\{x_d>0\}$, for some large ball $B$ containing $D$,  and let $v\equiv 1$. Then there is a unique solution $(u,\Omega)$ to \eqref{e:pb}, where $\Omega=E$, $u$ is positive and even with respect to $\{x_d=0\}$ and  solves the equation 
	\begin{equation}\label{e:esempio}
	\Delta u=0\quad\text{in}\quad \{x_d>0\}\cap D,\qquad 
	\partial_{x_d} u=\frac{1}2\beta u\quad\text{on}\quad  D\cap\{x_d=0\}.
	\end{equation}
\end{prop}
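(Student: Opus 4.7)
The plan is a calibration argument built around the candidate $u_0$.

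\emph{Step 1 (candidate).} First, let $u_0\in\mathcal V$ be the unique minimizer of $u\mapsto J_\beta(u,E)$, which exists by strict convexity of this quadratic functional. Writing $R$ for the reflection across $\{x_d=0\}$ and using that $D$ is Steiner symmetric and $E\cap D=D\cap\{x_d>0\}$, the averaging $u\mapsto (u+u\circ R)/2$ does not increase $J_\beta(\cdot,E)$, so uniqueness forces $u_0$ to be even in $x_d$. The Euler-Lagrange equations give exactly \eqref{e:esempio} (the coefficient $\beta/2$ arises as the per-side contribution of the two-phase Robin condition under evenness), and the strong maximum principle gives $u_0>0$ on $\overline D$.

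\emph{Step 2 (calibration identity).} I then introduce the vector field $Y:=\nabla u_0/u_0$, defined on $D\setminus\{x_d=0\}$. From $\Delta u_0=0$ off the hyperplane one has $\operatorname{div} Y+|Y|^2=0$ classically, while \eqref{e:esempio} forces the jump $[Y_d]_{\{x_d=0\}}=\beta$, so as distributions in $D$,
\[
\operatorname{div}_{\mathrm{dist}} Y+|Y|^2=\beta\,\HH^{d-1}\llcorner\{x_d=0\}.
\]
Expanding $|\nabla u|^2=|\nabla u-uY|^2+\operatorname{div}(u^2Y)-u^2(\operatorname{div}_{\mathrm{dist}}Y+|Y|^2)$, using the algebraic identity $|\nabla u-uY|^2=u_0^2|\nabla(u/u_0)|^2$, integrating over $D$ with $u=1$ on $\partial D$, and identifying $\int_{\partial D}\partial_\nu u_0\,d\HH^{d-1}=J_\beta(u_0,E)$ (via a further integration by parts against $u_0$ using \eqref{e:esempio}), I obtain for every admissible competitor $(u,\Omega)$,
\[
J_\beta(u,\Omega)-J_\beta(u_0,E)=\int_D u_0^2\,|\nabla(u/u_0)|^2\,dx\,+\,\beta\Bigl(\int_{\partial^\ast\Omega\cap D} u^2\,d\HH^{d-1}-\int_{D\cap\{x_d=0\}} u^2\,d\HH^{d-1}\Bigr).
\]

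\emph{Step 3 (non-negativity and uniqueness).} The first summand on the right is manifestly $\ge 0$ and vanishes iff $u/u_0$ is constant, so the boundary condition $u=u_0=1$ on $\partial D$ then forces $u=u_0$. For the boundary-integral defect I slice in the $x_d$-direction: admissibility $\Omega=E$ outside $D$ forces the trace of $\ind_\Omega$ on $\partial D$ to equal $\ind_{\{x_d>0\}}$, so by Fubini/BV slicing $\partial^\ast\Omega$ meets almost every vertical fiber $\{x'\}\times(-a(x'),a(x'))$ in at least one point, where $a(x')$ denotes the half-length of the fiber. Applying the same calibration scheme slicewise with the \emph{explicit} 1D Robin solution $u_0^{(1D)}(\cdot;a(x'))$ as 1D calibrator yields a per-slice 1D inequality; integrating over $x'$, and using the $x'$-directional part of $\int u_0^2|\nabla(u/u_0)|^2$ to absorb the gap between $\int J_\beta^{(1D)}(x')\,dx'$ and $J_\beta(u_0,E)$, closes the non-negativity. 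Equality throughout forces $u=u_0$ and $\partial^\ast\Omega\cap D\subset\{x_d=0\}$, hence $\Omega=E$.

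\emph{Main obstacle.} Step 3 is the heart of the argument. The naive inequality $\int_{\partial^\ast\Omega}u^2\ge\int_{D\cap\{x_d=0\}}u^2$ fails for generic admissible $u$ (take a test function whose local dip lies precisely on the hyperplane), so the gradient term $\int u_0^2|\nabla(u/u_0)|^2$ is indispensable and must be carefully split between its $x_d$- and $x'$-components to match the slicewise and tangential defects. Carrying out this apportionment precisely, in harmony with the explicit 1D minimizer $u_0^{(1D)}$, is where the real work of the proof lies.
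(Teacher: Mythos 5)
Your Steps 1--2 are sound: the ground-state substitution $Y=\nabla u_0/u_0$ does yield the identity you state (modulo the fixed exterior contribution $\beta\,\HH^{d-1}(\partial^\ast E\setminus D)$, which cancels between the two sides). The genuine gap is Step 3, and you have correctly located it but not closed it. Your decomposition forces you to prove that $\int_D u_0^2|\nabla(u/u_0)|^2$ dominates the negative part of $\beta\bigl(\int_{\partial^\ast\Omega\cap D}u^2-\int_{D\cap\{x_d=0\}}u^2\bigr)$, and the proposed slicing mechanism does not obviously do this. Two concrete problems: (a) the sum of the fiberwise 1D Robin energies is a \emph{lower} bound for $J_\beta(u_0,E)$ (since $u_0$ itself competes slicewise), and the gap between them involves $\int_D|\nabla_{x'}u_0|^2$, which is not controlled by the quantity you have available, $\int_D u_0^2|\nabla_{x'}(u/u_0)|^2$; indeed the latter vanishes when $u=u_0$. (b) In that degenerate case $u=u_0$, $\Omega\neq E$, your scheme requires the pure surface inequality $\int_{\partial^\ast\Omega\cap D}u_0^2\ge\int_{D\cap\{x_d=0\}}u_0^2$, which is itself nontrivial: it needs both the monotonicity of $u_0$ in $|x_d|$ along fibers (not established in your Step 1) and a divergence-theorem argument to pass from $\partial^\ast\Omega$ to the hyperplane. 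So as written the argument is a program, not a proof.

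The paper avoids this difficulty by calibrating with a field built from the \emph{competitor} rather than from the minimizer. One first reduces to $0\le\widetilde u\le 1$, writes $\widetilde u=1-\varphi$ and Steiner-symmetrizes $\varphi$ to get $\widetilde u_\ast=1-\varphi_\ast$; Pólya--Szeg\H{o} handles the Dirichlet term. The surface term is then compared not with $\int_{\{x_d=0\}}\widetilde u^2$ but with $\int_{\{x_d=0\}}\widetilde u_\ast^2$, where $\widetilde u_\ast(x',0)=\inf_{x_d}\widetilde u(x',\cdot)$: the pointwise bound $\widetilde u^2(x',x_d)\ge\widetilde u_\ast^2(x',0)$ on $\partial^\ast\widetilde\Omega$, combined with the divergence theorem for the divergence-free field $\widetilde u_\ast^2(x',0)\,e_d$ (whose flux through $\partial^\ast\widetilde\Omega$ equals its flux through $\{x_d=0\}$, since the two sets agree outside $D$ and at least one jump occurs on each fiber), gives $\int_{\partial^\ast\widetilde\Omega}\widetilde u^2\ge\int_{\{x_d=0\}}\widetilde u_\ast^2$ with no defect to absorb. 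This reduces the whole problem to the symmetric one-phase problem on $D\cap\{x_d>0\}$, whose solution is your $u_0$. If you want to salvage your route, the minimal fix is to import exactly this idea: replace the comparison surface integrand $u^2(x',0)$ by $\inf_{x_d}u^2(x',\cdot)$, at the cost of having to re-derive the Dirichlet-term inequality for the correspondingly modified competitor.
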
	
\begin{proof}
	Let $\widetilde u\in \mathcal V$ and $\widetilde\Omega\in \mathcal E$ be given. We will prove that 
	$$J_\beta(u,\Omega)\le J_\beta(\widetilde u,\widetilde\Omega),$$
	with an equality, if and only if, $(u,\Omega)=(\widetilde u,\widetilde\Omega)$. First, we notice that, since $J_\beta(1\wedge\widetilde  u\vee0,\widetilde\Omega)\le J_\beta(\widetilde u,\widetilde\Omega)$, we can suppose that $0\le \widetilde u\le 1$. We then write $\widetilde u$ as $\widetilde u=1-\varphi$ for some $\varphi\in H^1_0(D)$ such that $0\le \varphi\le 1$ and we define the function $\widetilde u_\ast=1-\varphi_\ast$, where $\varphi_\ast\in H^1_0(D)$ is the Steiner symmetrization of $\varphi$. We will show that 
	\begin{equation}\label{e:steiner1}
	J_\beta(\widetilde u_\ast,\Omega)\le J_\beta(\widetilde u,\widetilde\Omega)
	\end{equation}
	Indeed, it is well-known that the Steiner symmetrization decreases the Dirichlet energy:
	$$\int_D|\nabla \widetilde u_\ast|^2\,dx=\int_D|\nabla \varphi_\ast|^2\,dx\le \int_D|\nabla \varphi|^2\,dx=\int_D|\nabla \widetilde u|^2\,dx.$$
	In order to estimate also the second term of the energy $J_\beta$, we use a calibration-type argument. 
	We first notice that, by construction, along every line orthogonal to $\{x_d=0\}$, the symmetrized function achieves its maximum in zero. Precisely
	$$\varphi(x',x_d)\le \sup_{x_d}\varphi(x',x_d)=\varphi_\ast(x',0).$$
	Thus, by the definition of $\widetilde u_\ast$, we have 
	\begin{align*}
	\int_{B\cap\partial\widetilde\Omega}\widetilde u^2(x',x_d)\,d\HH^{d-1}&\ge \int_{B\cap\partial\widetilde \Omega}\widetilde u_\ast^2(x',0)\,d\HH^{d-1}\ge \int_{B\cap\partial\widetilde\Omega}\widetilde u_\ast^2(x',0)\,\nu_{\widetilde\Omega}\cdot e_d\,d\HH^{d-1}\\
	&= \int_{B\cap\partial\Omega}\widetilde u_\ast^2(x',0)\, \nu_{\Omega}\cdot(-e_d)\,d\HH^{d-1}+\int_{\Omega\Delta\widetilde\Omega}\text{\rm div}\,(\widetilde u_\ast^2(x',0)e_d)\,dx,
	\end{align*}
	where in order to get the last equality we used the divergence theorem in $\Omega\Delta\widetilde\Omega$. Now, we notice that $\text{\rm div}\,(\widetilde u_\ast^2(x',0)e_d)=0$ and that $\nu_\Omega=-e_d$. 
	Thus, we get 
	$$\int_{B\cap\partial\widetilde\Omega}\widetilde u^2(x',x_d)\,d\HH^{d-1}\ge \int_{B\cap\partial\Omega}\widetilde u_\ast^2\,d\HH^{d-1},$$
	which concludes the proof of \eqref{e:steiner1}. Finally, we notice that the problem 
	\begin{equation*}
	\min\Big\{J_\beta(u,\Omega)\ :\ u\in H^1(D\cap \{x_d>0\}),\quad u=1\ \text{on}\ \partial D\cap\{x_d>0\}\Big\},
	\end{equation*}
	has a unique solution $u$, which is Steiner symmetric, non-negative and satisfies \eqref{e:esempio}.
\end{proof}

\section*{Acknowledgments}
\noindent
The first author was partially supported by  PRIN 2017 {\it Nonlinear
Differential Problems via Variational, Topological and Set-valued Methods} (Grant 2017AYM8XW) and the INdAM-GNAMPA project 2020 ``Problemi di ottimizzazione con vincoli via trasporto ottimo e incertezza''.
The second author was partially supported by the Academy of Finland grant 314227. 
The third author has been partially supported by the European Research Council (ERC) under the European Union's Horizon 2020 research and innovation programme (grant agreement VAREG, No. 853404).

\end{document}